\makeatletter \@addtoreset{equation}{section} \makeatother
\newtheorem{theorem}{Theorem}[section]
\newtheorem{definition}{Definition}[section]
\newtheorem{lemma}{Lemma}[section]
\newtheorem{remark}{Remark}[section]
\begin{document}	
	
	\title{\sc   On subelliptic equations on  stratified Lie groups driven by  singular nonlinearity and  weak $L^1$ data}	
	\author{\sc Subhashree Sahu$^{a}$, Debajyoti Choudhuri$^{b}$, Du\v{s}an D. Repov\v{s}$^{c,d,e}$\footnote{Corresponding author: dusan.repovs@guest.arnes.si}\\
		\small{$^{a}$Department of Mathematics, National Institute of Technology Rourkela,}\\[-0.2cm]
		\small{769008, Rourkela, Odisha, India.  {\it 522ma6008@nitrkl.ac.in}}\\[-0.2cm]
		\small{$^{b}$School of Basic Sciences, Indian Institute of Technology Bhubaneswar,}\\[-0.2cm]
		\small{752050, Khordha, India.  {\it dchoudhuri@iitbbs.ac.in}}\\[-0.2cm]
		\small{$^c$Faculty of Education, University of Ljubljana, 1000, Ljubljana, Slovenia.}\\[-0.2cm]
		\small{$^d$  Faculty of Mathematics and Physics, University of Ljubljana, 1000, Ljubljana, Slovenia }\\[-0.2cm]
		\small{$^e$ Institute of Mathematics, Physics and Mechanics, 1000, Ljubljana, Slovenia.}}	 	
	
	\maketitle
	\begin{abstract}
		\noindent The article is about an elliptic problem defined on a {\it stratified Lie group}. Both sub and superlinear cases are considered whose solutions are guaranteed to exist in light of the interplay between the nonlinearities and the weak $L^1$ datum. The existence of infinitely many solutions is proved  for 
		suitable values
		 of $\lambda, p, q$ by using the Symmetric Mountain Pass Theorem. 
		\begin{flushleft}
			{\it Keywords}:~  Stratified Lie Group, Subelliptic operator, Marcinkiewicz space, Singular problem.\\
			{\it Math. Subject Classification (2020)}:~35R35, 35Q35, 35J20, 46E35.
		\end{flushleft}
	\end{abstract}	
	
	\section{Introduction}\label{s1}
	In this article, we study the existence of  solutions to the following problem:
	\begin{equation}\tag{P}\label{main}
		\left\{\begin{aligned}
			-\Delta _{p,\mathbb{G}}\,u&= f(x)u^{-\eta} + \lambda |u|^{q-2} u~\text{in}~\Omega,\\
			u&>0 ~\text{in}~\Omega,\\
			u&=0~\text{on}~\partial\Omega,
		\end{aligned}\right.
	\end{equation}
	where  $\Delta_{p,\mathbb{G}}\,u:=\operatorname{div}_{\mathbb{G}}(|\nabla_{\mathbb{G}}u|^{p-2}\nabla_{\mathbb{G}}u)$ is the sub elliptic  $p$-Laplacian and $\Omega$ is any bounded domain on a stratified Lie group  $\mathbb{G}.$ Here $0 < \eta < 1 <p <Q$, where $Q$ is the homogeneous dimension of   $\mathbb{G}.$ We consider $f$ to be a nonnegative function in some Marcinkiewicz
	 spaces
	 $\mathcal{M}^r (\Omega)$ for $r\geq 1$. We shall establish the existence of weak solutions for two different cases: $(\text{i})$~$\lambda<0$, $1<p<q$ and $(\text{ii})$~$\lambda>0$, $1<q<p$.
	
	It is worth recalling  that singular local semilinear elliptic problems such as 
	\begin{equation}\label{main_sing_prob}
		\left\{\begin{aligned}
			-\Delta \,u&= f(x)u^{-\eta}~\text{in}~\Omega,\\
			u&>0 ~\text{in}~\Omega,\\
			u&=0~\text{on}~\partial\Omega,
		\end{aligned}\right.
	\end{equation}
	arise in various physical problems, such as chemical heterogeneous catalysts
	(see e.g.,
	 Aris  \cite{aris1})
	  and
	   non-Newtonian fluids
	   (see e.g.  Emden-Shaker \cite{emden1}).
	    The term $u^{\eta}$ describes the resistance of the material and  is used in the modelling of the heat conduction in electrically conducting materials (see e.g., Fulks-Maybee \cite{fulks1} and Nachman-Callegari \cite{nachman1}). 
	
	However, our treatment of the subject will be confined within the boundaries of mathematics. No discussion of singular semilinear PDEs can be  complete without mentioning the seminal work of Lazer-McKenna \cite{lazer1}. The reader can
	 also refer to the work of 
	 Arora et al. \cite{arora1}, 
	 Biswas et al. \cite{biswas1}, 
	 Molica Bisci-Ortega \cite{bisci1}, 
	 Sbai-Hadfi \cite{sbai1}, 
	 Zuo et al. \cite{zuo}, 
	 and the references therein. 
	 The readers can consult Giacomoni et al. \cite{giacomoni1} for their approach to handling the singular term. 
	 
	 More recently,  researchers turned their attention to PDEs on {\it stratified Lie groups} (see the definition in Section \ref{s2}). One may wonder as to why this is an interesting consideration? The challenge here lies in checking if the results that hold for Euclidean domains (commutative setup) continue to hold for a noncommutative setup for a domain.
	  Ghosh et al. \cite{ghosh1} have recently reproduced a theory of fractional Sobolev spaces that opened the gateway to considering nonlocal elliptic PDEs. 
	
	The main results proved in this paper are as follows. 
	
	\begin{theorem}\label{p1theorem4.1}
		For every $0<\eta<1<p<Q$, $-\Lambda<\lambda<\infty$ with $|\Lambda|<<1,$ and every nonnegative function $f\in L^1_w (\Omega)\backslash\{0\}$,   problem \eqref{main} has at least one weak solution $u_\eta$, which is positive on $W^{1,p}_{0} (\Omega)\cap L^{\infty}(\Omega).$ Moreover, when $-\Lambda<\lambda<0,1<p<q$,   problem \eqref{main} admits a unique weak solution.
	\end{theorem}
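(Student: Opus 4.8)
The plan is to split the analysis into the two regimes and, in each, to build the solution as a limit of approximating problems in which both the singular term $f(x)u^{-\eta}$ and the weak-$L^1$ datum are regularized. First I would truncate $f$ by $f_n := \min\{f, n\}$ (so $f_n \in L^\infty(\Omega)$ and $f_n \uparrow f$) and consider the regularized equation $-\Delta_{p,\mathbb{G}} u_n = f_n(x)(u_n + \tfrac1n)^{-\eta} + \lambda|u_n|^{q-2}u_n$ with zero boundary data. For fixed $n$, existence of a positive $u_n \in W^{1,p}_0(\Omega)\cap L^\infty(\Omega)$ follows from standard variational or sub-/supersolution arguments on the stratified group: the right-hand side is bounded above by $n^{1+\eta} + \lambda|u_n|^{q-2}u_n$, and when $\lambda<0$, $1<p<q$ the energy functional is coercive (the negative $|u|^q$ term helps), while when $\lambda>0$, $1<q<p$ subcriticality $q<p<Q\le p^*$ plus the truncation again gives a bounded, coercive-enough functional. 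A uniform lower bound $u_n \ge c_\omega > 0$ on each $\omega\Subset\Omega$ comes from a weak Harnack / strong maximum principle for $\Delta_{p,\mathbb{G}}$ together with monotonicity $u_n \le u_{n+1}$ (obtained by comparison, since the regularized nonlinearities are ordered).

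Next I would derive $n$-independent a priori bounds. Testing the equation with $u_n$ itself and using the Marcinkiewicz/weak-$L^1$ regularity of $f$ via the Sobolev embedding on $\mathbb{G}$ (Ghosh et al.~type inequalities) controls $\|\nabla_{\mathbb{G}} u_n\|_p$; the singular term is handled by testing with $u_n$ on $\{u_n < 1\}$ and with a fixed power of $u_n$ or with $(u_n-1)^+$ elsewhere, exactly as in the Euclidean theory of Boccardo–Orsina and Canino–Sciunzi, so that $\{u_n\}$ is bounded in $W^{1,p}_0(\Omega)$ (in case (i) and the good range of (ii)) and in every $W^{1,p}_{0,\mathrm{loc}}$ otherwise. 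The key point specific to this paper is that all the integration-by-parts and Sobolev steps are available verbatim on a stratified Lie group, since $\nabla_{\mathbb{G}}$ satisfies the same Leibniz and chain rules and $|\nabla_{\mathbb{G}}\cdot|$ the same convexity used in the $p$-Laplacian comparison. With these bounds I extract a weak limit $u_\eta$, upgrade to a.e.\ convergence of $\nabla_{\mathbb{G}} u_n$ by the standard $S^+$/strong-convergence lemma for $\Delta_{p,\mathbb{G}}$ (testing with $(u_n - u_\eta)$ and using monotonicity of $\xi\mapsto|\xi|^{p-2}\xi$), and pass to the limit in the weak formulation: the term $f_n(x)(u_n+\tfrac1n)^{-\eta}\varphi$ converges by monotone/dominated convergence once the uniform local lower bound $u_n \ge c_\omega$ on $\mathrm{supp}\,\varphi$ is in hand, and $\lambda|u_n|^{q-2}u_n\varphi$ converges by the a.e.\ convergence and $L^{q'}$-equi-integrability. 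The $L^\infty$ bound on $u_\eta$ (for the relevant $\lambda$ range with $|\Lambda|\ll1$) follows from a Stampacchia truncation argument applied to the limit equation.

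For the uniqueness statement when $-\Lambda<\lambda<0$, $1<p<q$, I would argue by the classical convexity/monotonicity trick: if $u,v$ are two weak solutions, test the difference of the two equations with $(u^p - v^p)/u^{p-1} - (u^p-v^p)/v^{p-1}$ (the Diaz–Saa type test function, legitimate here because of the local positivity and the $W^{1,p}_0\cap L^\infty$ regularity), use the hidden convexity of $t\mapsto |\nabla_{\mathbb{G}} t^{1/p}|^p$ in the $\Delta_{p,\mathbb{G}}$ term, and observe that $t\mapsto f(x)t^{-\eta}$ is decreasing while $t\mapsto \lambda t^{q-1}$ is also decreasing precisely when $\lambda<0$ — so the monotone structure forces $u=v$. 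The main obstacle I anticipate is the uniform-in-$n$ lower bound and the accompanying a priori estimate near $\partial\Omega$: because $f$ is only in $L^1_w(\Omega)$, one cannot expect global $W^{1,p}_0$ bounds for all parameter ranges without the smallness $|\Lambda|\ll1$, and controlling the singular term when $u_n$ degenerates at the boundary requires carefully chosen test functions (powers of $u_n$, or $G_\delta(u_n)$ truncations) together with the stratified Sobolev inequality; making this rigorous on $\mathbb{G}$, where the geometry near $\partial\Omega$ is governed by the Carnot–Carathéodory metric rather than the Euclidean one, is where the real work lies.
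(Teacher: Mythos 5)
Your overall strategy coincides with the paper's: truncate $f$ to $f_n=\min\{f,n\}$, regularize the singularity to $(u+\tfrac1n)^{-\eta}$, prove existence for the frozen problem, obtain $n$-independent bounds, pass to the limit using a uniform local lower bound on compact subsets, and prove uniqueness for $\lambda<0$ from the monotone structure of the right-hand side. The differences are in the implementation. For the fixed-$n$ problem the paper does not use sub-/supersolutions: it first solves the auxiliary problem $-\Delta_{p,\mathbb{G}}u=a+\lambda|u|^{q-2}u$ with bounded $a$ by direct minimization (Lemmas 3.1 and 3.3), then treats the dependence of the singular denominator on $u$ via a Schauder fixed point for the map $g\mapsto u$ solving $-\Delta_{p,\mathbb{G}}u=f_n(g^++\tfrac1n)^{-\eta}+\lambda|u|^{q-2}u$, with continuity controlled through the algebraic inequality of Lemma 2.3. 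For the passage to the limit the paper proves \emph{strong} $W^{1,p}_0$ convergence by matching $\lim\|u_n\|^p$ with $\|u_\infty\|^p$ and invoking uniform convexity (Lemma 4.1), rather than your $S^+$/a.e.-gradient-convergence route; both are standard and achieve the same end. For uniqueness the paper tests the difference of the two equations with $(u_1-u_2)^+$ and uses Lemma 2.3 together with the fact that both $t\mapsto ft^{-\eta}$ and $t\mapsto\lambda t^{q-1}$ are decreasing for $\lambda<0$; this is more elementary than your D\'iaz--Saa argument and avoids justifying the hidden-convexity test function, though your version would also work in this parameter range.

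One step in your outline deserves caution: you derive the uniform local lower bound $u_n\ge c_\omega$ from the monotonicity $u_n\le u_{n+1}$ "obtained by comparison, since the regularized nonlinearities are ordered." In case (ii) the reaction term $\lambda|u|^{q-2}u$ with $\lambda>0$ is \emph{increasing} in $u$, so the right-hand side is not monotone decreasing in $u$ and the weak comparison principle you need does not apply directly; the ordering of the sequence is not available by that argument. The paper instead obtains the lower bound on each compact set from the strong minimum principle applied to each $u_n$ separately (via the auxiliary Lemmas 3.1 and 3.3), which is the safer route; if you keep your structure, you should replace the comparison step by a fixed uniform subsolution independent of $n$, or argue the positivity bound as the paper does.
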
	
	
	\begin{theorem}\label{p1thm5.2}
		For every $1<q<p<Q,$  there exists $0<\Lambda<\infty$ such that for every $\lambda\in (\frac{1}{p}, \Lambda),$    problem \eqref{main} has a sequence of nonnegative weak solutions $\{u_n\} \subset X$ and $u_n \to 0 $ on $X.$ 
	\end{theorem}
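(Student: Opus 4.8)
\noindent\emph{Proposed proof.} The natural object is the even functional $J_\lambda\colon X\to\mathbb{R}$,
\[
J_\lambda(u)=\frac1p\int_\Omega|\nabla_{\mathbb{G}}u|^{p}\,dx-\frac{1}{1-\eta}\int_\Omega f(x)|u|^{1-\eta}\,dx-\frac{\lambda}{q}\int_\Omega|u|^{q}\,dx,
\]
with $J_\lambda(0)=0$. Since $0<1-\eta<q<p$, the subelliptic Sobolev embedding on $\mathbb{G}$ (together with the standing integrability of $f$) bounds the two lower-order terms by $c_1\|u\|_X^{1-\eta}$ and $c_2\lambda\|u\|_X^{q}$, so $J_\lambda$ is coercive and bounded below; convexity of the leading term and the compact embedding $X\hookrightarrow\hookrightarrow L^{s}(\Omega)$ for $1\le s<p^{\ast}=\tfrac{pQ}{Q-p}$ give weak lower semicontinuity. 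The plan is to apply the symmetric version of the mountain pass theorem (in Kajikiya's form): an even functional that is bounded below, vanishes at $0$, satisfies the Palais--Smale condition and admits, for every $k\in\mathbb{N}$, a compact symmetric set of Krasnoselskii genus $\ge k$ on which it is strictly negative, possesses a sequence of critical points $\{u_n\}$ with $J_\lambda(u_n)\to0^-$ and $\|u_n\|_X\to0$.

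Verifying the hypotheses: Palais--Smale sequences are bounded by coercivity, the lower-order terms converge along such a sequence by the compactness of $X\hookrightarrow\hookrightarrow L^{q}(\Omega)$, and strong convergence then follows from the $(S_+)$-property of $-\Delta_{p,\mathbb{G}}$. For the genus condition, fix $k$, pick a $k$-dimensional subspace $E_k\subset C_c^{\infty}(\Omega)\cap X$, on which all norms are equivalent; discarding the nonnegative integral $\tfrac{1}{1-\eta}\int f|u|^{1-\eta}$ leaves $J_\lambda(u)\le\tfrac1p\|u\|_X^{p}-\tfrac{\lambda a_k}{q}\|u\|_X^{q}$ on $E_k$, which (using $q<p$) is negative on the sphere $\{\|u\|_X=\rho_k\}$ for $\rho_k$ small, and that sphere has genus $k$. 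Hence the levels $c_k:=\inf_{\gamma(A)\ge k}\sup_{A}J_\lambda$ satisfy $c_k<0$, and $c_k\uparrow 0$ as $k\to\infty$ (shrinking $\rho_k$).

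The main obstacle is that $J_\lambda$ is \emph{not} of class $C^{1}$, because of the singular term (and $fu^{-\eta}\varphi$ need not be integrable), so the classical symmetric mountain pass theorem does not apply verbatim. I would circumvent this by regularizing: replace $|u|^{1-\eta}$ by $(u^{2}+\varepsilon^{2})^{(1-\eta)/2}-\varepsilon^{1-\eta}\ge0$, obtaining even $C^{1}$ functionals $J_{\lambda,\varepsilon}$ whose lower-order terms are dominated uniformly in $\varepsilon$ by the same bounds, so the above verification goes through with $\varepsilon$-independent constants and $\varepsilon$-independent genus levels; Kajikiya's theorem then yields, for each $\varepsilon$, a sequence of weak solutions $u_{\varepsilon}^{(k)}$ of the regularized problem with $J_{\lambda,\varepsilon}(u_{\varepsilon}^{(k)})\le c_k<0$, uniformly bounded in $X$, and with $\|u_{\varepsilon}^{(k)}\|_X$ small for large $k$. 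Passing to the limit $\varepsilon\to0$ for each index $k$ is the delicate point: as in the proof of Theorem~\ref{p1theorem4.1}, a comparison argument provides an $\varepsilon$-uniform lower bound $u_{\varepsilon}^{(k)}\ge\underline{u}>0$ on every $\Omega'\Subset\Omega$, which makes the term $f(x)(u_{\varepsilon}^{(k)}+\varepsilon)^{-\eta}$ converge and identifies the weak limit $u^{(k)}$ as a positive weak solution of \eqref{main} with $J_\lambda(u^{(k)})\le c_k<0=J_\lambda(0)$, so $u^{(k)}\not\equiv0$; the smallness of $\|u_{\varepsilon}^{(k)}\|_X$ for large $k$ survives the limit, giving $\|u^{(k)}\|_X\to0$ and $J_\lambda(u^{(k)})\to0^-$, i.e.\ the announced sequence $\{u_n\}$. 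Nonnegativity is built in since $J_\lambda$ sees $u$ only through $|u|$ and $|\nabla_{\mathbb{G}}u|=|\nabla_{\mathbb{G}}|u||$, so one may work with $|u|$ from the outset, and interior positivity follows from the weak Harnack inequality on $\mathbb{G}$, exactly as in Theorem~\ref{p1theorem4.1}. The admissible range $\lambda\in(\tfrac1p,\Lambda)$ is precisely what keeps the genus levels negative and the comparison and coercivity bounds uniform throughout the limiting procedure, with $\Lambda$ chosen small enough for this. I expect this final limit, combined with handling the non-smoothness of $J_\lambda$, to be the crux of the proof.
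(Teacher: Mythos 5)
Your overall architecture coincides with the paper's: Kajikiya's symmetric mountain pass theorem, Palais--Smale via coercivity plus the compact embedding, and genus-$k$ spheres in finite-dimensional subspaces on which the functional is negative. You diverge in two ways. First, you regularize the singular term $|u|^{1-\eta}\mapsto(u^2+\varepsilon^2)^{(1-\eta)/2}-\varepsilon^{1-\eta}$ to restore $C^1$ smoothness and then pass to the limit $\varepsilon\to 0$; the paper instead keeps the singular term as is and truncates the \emph{other} nonlinearity with a cutoff $\phi$ supported where $|u|\le 2i$, obtains critical points of the truncated functional $\hat J$, and removes the cutoff at the end by a Moser-iteration $L^\infty$ bound $\|u_n\|_{L^\infty}\le i$. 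Second, your genus estimate $\hat J(u)\le\frac1p\|u\|^p-\frac{\lambda a_k}{q}\|u\|^q<0$ on small spheres uses only $q<p$ and works for every $\lambda>0$, whereas the paper's estimate converts the $q$-power into a $p$-power ($G(u)\ge 2M^pu^p$ for $|u|\le i$) and therefore needs $\lambda>\frac1p$; your version is cleaner and in fact explains the hypothesis $\lambda>\frac1p$ as an artifact of the paper's computation rather than a necessity.

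That said, there are two concrete gaps. (1) Your nonnegativity argument --- ``$J_\lambda$ sees $u$ only through $|u|$, so one may work with $|u|$ from the outset'' --- does not work: $J_\lambda(|u|)=J_\lambda(u)$ does not imply that $|u|$ is a critical point when $u$ is, and the genus machinery produces critical points with no sign information. This matters doubly for you, because your passage to the limit $\varepsilon\to0$ hinges on a uniform interior lower bound $u^{(k)}_\varepsilon\ge\underline u>0$ on compacta ``as in Theorem \ref{p1theorem4.1}''; but the comparison/minimum-principle argument there applies to the positive solutions built by the fixed-point scheme of Section \ref{s3}, not to possibly sign-changing Kajikiya critical points of $J_{\lambda,\varepsilon}$, so the singular term $f(x)u^{(k)}_\varepsilon((u^{(k)}_\varepsilon)^2+\varepsilon^2)^{-(1+\eta)/2}$ cannot be controlled as $\varepsilon\to0$ without a new idea. (2) The double limit itself (a full sequence of solutions for each $\varepsilon$, then $\varepsilon\to0$ for each index $k$) needs more than you give: you must show the limits $u^{(k)}$ are genuinely solutions of the singular problem \emph{and} remain distinct/nontrivial, which requires the energy bound $J_\lambda(u^{(k)})\le c_k<0$ to survive the limit --- a point where the lack of lower semicontinuity of the singular term from above, and the sign issue in (1), both bite. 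The paper avoids this double limit entirely by working with a single (non-smooth, but never regularized) functional; the price it pays is the $\lambda>\frac1p$ restriction and the final Moser-iteration step.
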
	

	The paper is structured as follows. 
	In Section \ref{s2}, we discuss some mathematical preliminaries and definitions.
	In Section \ref{s3}, we consider an approximation problem and discuss the existence of solutions.  
	In Section \ref{s4}, we prove Theorem \ref{p1theorem4.1}. 
	In Section \ref{s5}, we prove  Theorem \ref{p1thm5.2}.
	
	\section{Preliminaries}\label{s2}
	In this section, we recall some general facts related to  stratified Lie groups and the Marcinkiewicz space.
	For all other fundamental material used in this paper,
	 we refer the reader to the comprehensive monograph by Papageorgiou et al.  \cite{PRR}.
	We begin by referring to the readers the article by  Ghosh et al. \cite[Definition 1,2]{ghosh1} which  covers the basic definitions about homogeneous and stratified Lie group. 
	A working knowledge on these topics can be found in Garain-Ukhlov \cite{Garain}
	and
	 Hajlasz-Koskela \cite{HajlaszKoskela00}.
	
	Let $\Omega$ be an open subset of $\mathbb{G}$. The (horizontal) Sobolev space $W_{\mathbb{G}}^{1,p}(\Omega)$, $1\le p\le\infty$, consists of the locally integrable functions $u:\Omega\to\mathbb R$ having  the weak derivatives $X_if$, $1\le i\le N_1$, and
	the following
	 finite norm
	$$
	\|u\|_{W^{1,p}(\Omega)}:=\|u\|_{L^p(\Omega)}+\|\nabla_{\mathbb{G}}u\|_{L^p(\Omega)}\;.
	$$
	Here, $ \nabla_{\mathbb{G}}u= (X_1u,X_2u,\cdots ,X_{N_1}u)$ is the horizontal subgradient  of $u$. 
	
	The (horizontal) Sobolev space $W_{\mathbb{G},0}^{1,p}(\Omega)$ is the closure of $C^\infty_c(\Omega)$ in $W^{1,p}_{\mathbb{G}}(\Omega)$, equipped with the restriction $\|\ \|_{W^{1,p}_0(\Omega)}$ of $\|\ \|_{W^{1,p}(\Omega)}$.  Then $W_{\mathbb{G},0}^{1,p}(\Omega)$ is a real separable and uniformly convex Banach space. The interested reader can consult e.g.,  Folland \cite{Folland75}, { Vodop'yanov \cite{Vodop'yanov95}, Vodop'yanov-Chernikov} \cite{Vodop'yanovChernikov95}, and Xu et al. \cite{Xu90}. \\
	
	We shall refer to  
	Capogna et al. \cite[Theorem $2.3$]{Garofalo}, 
	Danielli \cite[Theorem $2.8$]{Danielli91}, 
	Danielli \cite[Theorem $2.2, 2.3$]{Danielli95},  
	Folland \cite[Theorem $5.15$]{Folland75}, 
	and
	Hajlasz-Koskela \cite[Theorem~8.1]{HajlaszKoskela00}
	 for the following embedding result.
	\begin{lemma}\label{p1lemma2.1}
		Let $\Omega$
		 be
		 bounded on $\mathbb{G}$ and $1\le p\le Q.$
		 Then $W^{1,p}_{\mathbb{G},0}(\Omega)$ is continuously embedded in $L^q(\Omega),$ for
		 every  $1\le q\le p^*:=Qp/(Q-p)$. Moreover, the embedding is compact, for every $1\le q<p^*.$ 
	\end{lemma}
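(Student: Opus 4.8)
The plan is to reduce the statement to the Sobolev--Gagliardo--Nirenberg inequality on the whole group and then obtain the remaining embeddings by soft arguments exploiting only the boundedness of $\Omega$. First I would record the basic inequality: there is $C=C(p,Q)>0$ such that for every $u\in C^\infty_c(\mathbb{G})$,
\[
\|u\|_{L^{p^*}(\mathbb{G})}\le C\,\|\nabla_{\mathbb{G}}u\|_{L^p(\mathbb{G})},\qquad p^*=\frac{Qp}{Q-p}.
\]
For $p=2$ this follows from the representation formula $u(x)=c_Q\int_{\mathbb{G}}\langle\nabla_{\mathbb{G}}u(y),\nabla_{\mathbb{G}}\Gamma(y^{-1}x)\rangle\,dy$, where $\Gamma$ is Folland's fundamental solution of the sub-Laplacian \cite{Folland75}, together with the Hardy--Littlewood--Sobolev inequality on $\mathbb{G}$ viewed as a space of homogeneous type of dimension $Q$; for general $1<p<Q$ one combines the same potential estimate with a truncation/iteration argument, or invokes the $(1,p)$-Poincar\'e inequality on Carnot--Carath\'eodory balls due to Capogna et al. \cite{Garofalo} and Hajlasz--Koskela \cite{HajlaszKoskela00}. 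Extending $u\in C^\infty_c(\Omega)$ by zero and passing to the closure gives $W^{1,p}_{\mathbb{G},0}(\Omega)\hookrightarrow L^{p^*}(\Omega)$ continuously; since $|\Omega|<\infty$, H\"older's inequality yields $L^{p^*}(\Omega)\hookrightarrow L^q(\Omega)$ for every $1\le q\le p^*$, and composing proves the first assertion. (The endpoint $p=Q$ is reduced to $p<Q$ by H\"older, $\Omega$ being bounded, or handled by a standard exponential-integrability estimate.)

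For the compact embedding with $1\le q<p^*$, I would run a Rellich--Kondrachov argument adapted to the group. Let $\{u_n\}$ be bounded in $W^{1,p}_{\mathbb{G},0}(\Omega)$; by the previous step it is bounded in $L^{p^*}(\Omega)$. The quantitative input is a continuity-of-translations estimate: for $h$ near the identity and $u\in C^\infty_c(\mathbb{G})$,
\[
\|u(h^{-1}\,\cdot\,)-u\|_{L^p(\mathbb{G})}\le C\,\rho(h)\,\|\nabla_{\mathbb{G}}u\|_{L^p(\mathbb{G})},
\]
$\rho$ a homogeneous norm, proved by writing $h$ as a finite product of exponentials of horizontal left-invariant vector fields (using the stratification) and integrating along the corresponding horizontal segments, i.e.\ a Chow--Rashevski\u\i-type path argument. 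Mollifying $u_n$ by left group convolution with an approximate identity and using this estimate shows $\{u_n\}$ is $L^p$-equicontinuous and $L^p$-tight on $\Omega$, so the Riesz--Fr\'echet--Kolmogorov criterion extracts a subsequence converging in $L^p(\Omega)$. Finally, for $p\le q<p^*$ the interpolation inequality $\|v\|_{L^q}\le\|v\|_{L^p}^{1-\theta}\|v\|_{L^{p^*}}^{\theta}$ together with the uniform $L^{p^*}$-bound upgrades this to convergence in $L^q(\Omega)$, while the case $q<p$ is immediate from $|\Omega|<\infty$.

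I expect the main obstacle to be the compactness step, specifically the translation estimate in the non-commutative setting: one cannot displace along coordinate directions directly but must connect points by horizontal curves while keeping the constants uniform. An economical alternative, which I would most likely adopt in the write-up, is to cite the abstract Rellich theorem for doubling metric-measure spaces supporting a $(1,p)$-Poincar\'e inequality (Hajlasz--Koskela \cite[Theorem 8.1]{HajlaszKoskela00}), which applies verbatim to $(\mathbb{G},\mathrm{d}_{cc},\text{Haar measure})$ and to its restriction to the bounded set $\Omega$. The continuous embedding, by contrast, is precisely the Folland--Capogna--Danielli Sobolev inequality and needs no new ideas beyond quoting \cite{Folland75,Garofalo,Danielli91,Danielli95}.
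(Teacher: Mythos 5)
The paper offers no proof of this lemma at all: it is stated as a known embedding theorem and simply attributed to Capogna--Danielli--Garofalo, Danielli, Folland, and Hajlasz--Koskela, which is exactly the ``economical alternative'' you settle on in your final paragraph --- so on that route you and the paper coincide. Your fuller sketch is nevertheless a correct reconstruction of how those references actually establish the result: a potential/representation estimate for $|u|$ in terms of the horizontal gradient plus fractional integration (with truncation for general $1<p<Q$) for the continuous embedding into $L^{p^*}$, H\"older on the bounded domain for the subcritical exponents, and a Riesz--Fr\'echet--Kolmogorov (or abstract doubling-plus-Poincar\'e Rellich) argument for compactness, upgraded to $L^q$, $p\le q<p^*$, by interpolation against the uniform $L^{p^*}$ bound. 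Two details to repair if you write this out in full. First, the translation estimate should be stated for \emph{right} translations, $\|u(\cdot\,h)-u\|_{L^p(\mathbb{G})}\le C\,d_{cc}(e,h)\,\|\nabla_{\mathbb{G}}u\|_{L^p(\mathbb{G})}$: the horizontal vector fields are left-invariant, so their flows are right translations, and it is $u(xy^{-1})$ that appears when you expand the group convolution $u\ast\phi_\epsilon$; as written, with $u(h^{-1}\,\cdot)$, your inequality is the one controlled by the right-invariant gradient, not by $\nabla_{\mathbb{G}}u$. Second, the lemma as stated formally includes $p=Q$, where $p^*=Qp/(Q-p)$ is undefined; your parenthetical reduction to $p<Q$ via H\"older on the bounded $\Omega$ covers every finite $q$, which is all the paper ever uses, so this is a defect of the statement rather than of your argument.
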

	For every  $1<p<Q,$ we 
	equip
	 the Sobolev space $W^{1,p}_{\mathbb{G},0}(\Omega)$ with the norm
	$$ \|u\|:=\|u\|_{W^{1,p}_{\mathbb{G},0}(\Omega)} = \|\nabla_{\mathbb{G}}u\|_{L^p(\Omega)}. $$
	\noindent  
	Next, we  recall the definition of the Marcinkiewicz space $\mathcal{M}^p(\Omega)$ and some results about this space.
	\begin{definition}
		For every $0 < s< \infty$, the Marcinkiewicz space $\mathcal{M}^s (\Omega)$ is the set of all measurable functions f, for which  there exists $c> 0$ such that
		$$	|\{x : |f(x)| > t\}| \le \frac{c^s}{t^s}.$$ 
		The norm on this space is defined by
		$$\|f\|_{L^s_w}:= \inf\{c>0: t^s |\{x : |f(x)| > t\}| \le c^s, \text { for every } t>0\}.$$
	\end{definition}
	\noindent One should note
	that $\mathcal{M}^s (\Omega) $ is a quasinormed linear space, for every  $0<s<\infty.$ For a proof
	 of the following theorems, see Grafakos \cite{cfa}.
	\begin{theorem}\label{p1thm2.1}
		For every $0<s<\infty$  and  $f$ in $\mathcal{M}^s(\Omega)$ such that the measure of $\Omega $ is finite, we have $\|f\|_{L^s_w(\Omega)}\le \|f\|_{L^s(\Omega)}$. Hence $L^s(\Omega)\subset \mathcal{M}^s(\Omega).$ Moreover, the following inclusions hold, for every    $0<s<r$, $$ L^r (\Omega)\subseteq \mathcal{M}^r(\Omega)\subseteq  L^s (\Omega).  $$ 
	\end{theorem}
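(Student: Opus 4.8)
The plan is to prove the three assertions in order: the norm comparison $\|f\|_{L^s_w(\Omega)}\le\|f\|_{L^s(\Omega)}$, and then the two inclusions in the chain $L^r(\Omega)\subseteq\mathcal M^r(\Omega)\subseteq L^s(\Omega)$ for $0<s<r$. For the first claim I would start from Chebyshev's (Markov's) inequality: for every $t>0$,
\[
|\{x:|f(x)|>t\}|\le\frac{1}{t^s}\int_\Omega|f|^s\,dx=\frac{\|f\|_{L^s(\Omega)}^s}{t^s}.
\]
Multiplying by $t^s$ shows that $c=\|f\|_{L^s(\Omega)}$ is an admissible competitor in the infimum defining $\|f\|_{L^s_w(\Omega)}$, hence $\|f\|_{L^s_w(\Omega)}\le\|f\|_{L^s(\Omega)}$. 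This immediately gives $L^s(\Omega)\subseteq\mathcal M^s(\Omega)$ and, specialized to the exponent $r$, the inclusion $L^r(\Omega)\subseteq\mathcal M^r(\Omega)$.

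For the second inclusion $\mathcal M^r(\Omega)\subseteq L^s(\Omega)$ with $s<r$, the key tool is the layer-cake (distribution function) formula
\[
\int_\Omega|f|^s\,dx=s\int_0^\infty t^{s-1}\,|\{x:|f(x)|>t\}|\,dt.
\]
I would split the integral at $t=1$ (or any fixed level). On $0<t\le 1$ I bound $|\{|f|>t\}|\le|\Omega|$, which is finite by hypothesis, and $\int_0^1 s\,t^{s-1}|\Omega|\,dt=|\Omega|<\infty$. On $t\ge 1$ I use the Marcinkiewicz bound $|\{|f|>t\}|\le c^r/t^r$ with $c=\|f\|_{L^r_w(\Omega)}$, so that
\[
s\int_1^\infty t^{s-1}\,\frac{c^r}{t^r}\,dt=s\,c^r\int_1^\infty t^{s-r-1}\,dt=\frac{s\,c^r}{r-s}<\infty,
\]
the integral converging precisely because $s-r-1<-1$, i.e. $s<r$. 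Adding the two pieces yields $\int_\Omega|f|^s\,dx<\infty$, so $f\in L^s(\Omega)$, and in fact one gets the quantitative estimate $\|f\|_{L^s(\Omega)}^s\le|\Omega|+\frac{s}{r-s}\|f\|_{L^r_w(\Omega)}^r$.

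The main (and only) real subtlety is the finiteness of the measure of $\Omega$: without it the small-$t$ part $\int_0^1 s\,t^{s-1}|\{|f|>t\}|\,dt$ need not converge, and indeed $\mathcal M^r\not\subseteq L^s$ on infinite-measure spaces. Since the hypothesis explicitly assumes $|\Omega|<\infty$, this causes no difficulty here; everything else is a routine application of Chebyshev's inequality and the layer-cake representation, which is exactly why the statement is cited to Grafakos \cite{cfa} rather than proved in detail. For the write-up I would present the Chebyshev step, then the layer-cake split, keeping the constants explicit so the inclusions are seen to be continuous embeddings of quasinormed spaces.
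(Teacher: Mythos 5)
Your proof is correct; the paper itself gives no proof of this theorem, simply citing Grafakos \cite{cfa}, and your Chebyshev-plus-layer-cake argument is exactly the standard one found there. One small remark: your split of the layer-cake integral at $t=1$ proves the set inclusion but yields the non-homogeneous bound $\|f\|_{L^s(\Omega)}^s\le|\Omega|+\frac{s}{r-s}\|f\|_{L^r_w(\Omega)}^r$, so if you want the continuous-embedding estimate $\|f\|_{L^s(\Omega)}\le C\,|\Omega|^{1/s-1/r}\|f\|_{L^r_w(\Omega)}$ advertised at the end of your write-up, you should instead split at $t=c\,|\Omega|^{-1/r}$ with $c=\|f\|_{L^r_w(\Omega)}$.
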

	\noindent Next, we state the H\"{o}lder inequality for quasinorms on the Marcinkiewicz space.
	\begin{theorem}\label{p1thm2.2}
		Let  $ X$ be a measurable space  and $a_j\in \mathcal{M}^{s_j}(X)$, where $0<s_j<\infty$ and $1\le j\le k.$ Let $$ \frac{1}{s} =  \frac{1}{s_1} +  \frac{1}{s_2}+ \cdots  \frac{1}{s_k}.$$ Then $$ \|a_1 a_2 \cdots a_k \|_{L^s_w} \le s^{\frac{-1}{s}} \Pi _{j=1}^k s_j ^{\frac{1}{s_j}} \Pi _{j=1}^k \|a_j\|_{L^{s_j}_w}.  $$
	\end{theorem}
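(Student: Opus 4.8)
The plan is to work directly with the distribution function $d_f(t) := |\{x : |f(x)| > t\}|$, in terms of which the Marcinkiewicz quasinorm admits the equivalent description $\|f\|_{L^s_w} = \sup_{t > 0} t\, d_f(t)^{1/s}$. This follows at once by unravelling the infimum in the definition: the requirement $t^s d_f(t) \le c^s$ for all $t>0$ is equivalent to $c \ge t\, d_f(t)^{1/s}$ for all $t > 0$, so the least admissible $c$ is exactly that supremum. Writing $A_j := \|a_j\|_{L^{s_j}_w}$, the defining inequality gives the level-set bound $d_{a_j}(t) \le (A_j/t)^{s_j}$ for every $j$ and every $t > 0$.

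The crucial step is a splitting of the level $t$ across the $k$ factors. Given $t > 0$ and any positive reals $t_1, \dots, t_k$ with $\prod_{j=1}^k t_j = t$, the elementary observation that $|a_1(x)\cdots a_k(x)| > t = \prod_j t_j$ forces $|a_j(x)| > t_j$ for at least one index $j$ yields the inclusion
$$\{x : |a_1 \cdots a_k|(x) > t\} \subseteq \bigcup_{j=1}^k \{x : |a_j(x)| > t_j\}.$$
Subadditivity of the measure together with the weak bounds above then gives
$$d_{a_1 \cdots a_k}(t) \le \sum_{j=1}^k d_{a_j}(t_j) \le \sum_{j=1}^k \left(\frac{A_j}{t_j}\right)^{s_j}.$$

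It remains to choose the split $(t_j)$ optimally, and this is where the sharp constant is produced. Setting $t_j = A_j \gamma_j$, the constraint $\prod_j t_j = t$ becomes $\prod_j \gamma_j = t/\prod_j A_j$, and the right-hand side becomes $\sum_j \gamma_j^{-s_j}$. Minimizing $\sum_j \gamma_j^{-s_j}$ subject to this fixed-product constraint --- by a Lagrange-multiplier computation, equivalently a weighted arithmetic--geometric mean inequality --- the optimum occurs when all the quantities $s_j \gamma_j^{-s_j}$ coincide; substituting the resulting $\gamma_j$ back and using $\sum_j 1/s_j = 1/s$ to collect exponents produces
$$d_{a_1 \cdots a_k}(t) \le \frac{1}{s}\, \Big(\prod_{j=1}^k s_j^{1/s_j}\Big)^{s} \Big(\prod_{j=1}^k A_j\Big)^{s}\, t^{-s}.$$
Hence $t^s d_{a_1\cdots a_k}(t) \le \frac{1}{s} \big(\prod_j s_j^{1/s_j}\big)^s \big(\prod_j A_j\big)^s$ uniformly in $t$; taking the supremum over $t > 0$ and extracting the $s$-th root gives precisely $\|a_1 \cdots a_k\|_{L^s_w} \le s^{-1/s} \prod_j s_j^{1/s_j} \prod_j \|a_j\|_{L^{s_j}_w}$, as claimed.

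The main obstacle is the constrained optimization of the split: one must check that the Lagrange stationary point is a genuine minimum and that the exponents bookkeep correctly so as to yield exactly $s^{-1/s}\prod_j s_j^{1/s_j}$ rather than a larger constant. An alternative that sidesteps the $k$-fold optimization is to establish the case $k=2$ first and then induct on $k$: combining two applications with an intermediate exponent $r$ defined by $1/r = \sum_{i \le m} 1/s_i$, the factors $r^{1/r}$ and $r^{-1/r}$ cancel, so the constants telescope and reproduce the stated bound. I would present the direct optimization, using the inductive route as a cross-check on the constant.
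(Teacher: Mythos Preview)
Your argument is correct and is precisely the standard proof of the weak-type H\"older inequality; the paper does not supply its own proof of this statement but simply cites Grafakos \cite{cfa}, where this exact distribution-function splitting and optimal level allocation appear. The only remark worth adding is that your weighted AM--GM alternative already certifies that the Lagrange critical point is the genuine minimum (take weights $w_j = s/s_j$ and apply $\sum_j w_j x_j \ge \prod_j x_j^{w_j}$ to $x_j = \gamma_j^{-s_j}/w_j$), so there is no need to present the optimization and the verification as separate steps.
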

	\noindent Next  is a simple algebraic lemma due to Lucio \cite[Lemma $2.1$]{Lucio} that provides the estimates
	which will be  necessary in the sequel.
	\begin{lemma}\label{p1Lucio}
		For every $1<p<\infty$, there exists a
		constant $C$ which  depends only on $p,$ 
		such that for every $\zeta$,$\rho \in \mathbb{R}^N,$ we have $$ \langle |\zeta|^{p-2}\zeta - |\rho|^{p-2}\rho, \zeta-\rho \rangle \ge C (|\zeta| +|\rho|)^{p-2} |\zeta-\rho|^2.$$
	\end{lemma}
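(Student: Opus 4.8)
The plan is to realize the left-hand side as an integral of the Hessian of the convex function $\Phi(\xi)=\tfrac1p|\xi|^p$ along the segment joining $\rho$ to $\zeta$, and then bound that integral from below. Write $F(\xi)=|\xi|^{p-2}\xi=\nabla\Phi(\xi)$ and $\xi_t=\rho+t(\zeta-\rho)$ for $t\in[0,1]$. Since $F$ is continuous on $\mathbb R^N$ and of class $C^1$ off the origin, the scalar function $t\mapsto\langle F(\xi_t),\zeta-\rho\rangle$ is absolutely continuous on $[0,1]$ (near a possible zero $t_0$ of the segment its derivative is $O(|t-t_0|^{p-2})$, which is integrable precisely because $p>1$), so the fundamental theorem of calculus gives
\[
\langle F(\zeta)-F(\rho),\zeta-\rho\rangle=\int_0^1\big\langle DF(\xi_t)(\zeta-\rho),\zeta-\rho\big\rangle\,dt .
\]
A direct computation gives $\langle DF(\xi)h,h\rangle=|\xi|^{p-2}|h|^2+(p-2)|\xi|^{p-4}\langle\xi,h\rangle^2$, and since $\langle\xi,h\rangle^2\le|\xi|^2|h|^2$ one obtains in all cases the pointwise bound $\langle DF(\xi)h,h\rangle\ge\min\{1,p-1\}\,|\xi|^{p-2}|h|^2$ (the middle term is nonnegative when $p\ge2$, and is dominated by $(p-2)|\xi|^{p-2}|h|^2$ when $1<p<2$). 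Hence
\[
\langle F(\zeta)-F(\rho),\zeta-\rho\rangle\ \ge\ \min\{1,p-1\}\,|\zeta-\rho|^2\int_0^1|\xi_t|^{p-2}\,dt .
\]

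It then remains to prove the geometric estimate $\int_0^1|\rho+t(\zeta-\rho)|^{p-2}\,dt\ge c(p)\,(|\zeta|+|\rho|)^{p-2}$. By the homogeneity of both sides under $(\rho,\zeta)\mapsto(\mu\rho,\mu\zeta)$ one may normalize $\max\{|\rho|,|\zeta|\}=1$ (the case $\rho=\zeta=0$ being trivial), and by symmetry assume $|\zeta|=1$. If $p\ge2$, then for $t\in[\tfrac34,1]$ we have $|\xi_t|=|(1-t)\rho+t\zeta|\ge t|\zeta|-(1-t)|\rho|\ge2t-1\ge\tfrac12$, so $\int_0^1|\xi_t|^{p-2}\,dt\ge\tfrac14\,2^{2-p}$, while $(|\zeta|+|\rho|)^{p-2}\le2^{p-2}$; this yields the claim with $c(p)=4^{1-p}$. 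If $1<p<2$, then the triangle inequality gives $|\xi_t|\le(1-t)|\rho|+t|\zeta|\le1$ for every $t$, hence $|\xi_t|^{p-2}\ge1$ and $\int_0^1|\xi_t|^{p-2}\,dt\ge1$, whereas $(|\zeta|+|\rho|)^{p-2}\le1$ since $|\zeta|+|\rho|\ge1$ and $p-2<0$; the claim follows with $c(p)=1$.

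Combining the two displays gives the lemma with $C=\min\{1,p-1\}\,c(p)$, which depends only on $p$. The degenerate cases are immediate: if $\zeta=\rho$ both sides vanish, and if one of $\zeta,\rho$ is $0$ the inequality reduces to $|\zeta-\rho|^p\ge C|\zeta-\rho|^p$. The only point needing genuine care is the passage from $p\ge2$ to $1<p<2$: in the latter range $|\xi_t|^{p-2}$ blows up where the segment meets the origin, so one must check integrability (guaranteed by $p-1>0$) before invoking the fundamental theorem of calculus, and the sign of $p-2$ reverses several of the inequalities above. This case split is the main, though still elementary, obstacle; alternatively one could bypass calculus and deduce the bound from Clarkson-type uniform convexity estimates for $|\cdot|^p$, but the integral argument is the cleanest route to an explicit constant.
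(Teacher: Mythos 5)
Your proof is correct, and it is essentially the standard argument: the paper itself gives no proof of this lemma but simply cites Damascelli \cite[Lemma 2.1]{Lucio}, where the inequality is obtained by the same device, namely writing $\langle F(\zeta)-F(\rho),\zeta-\rho\rangle$ as $\int_0^1\langle DF(\rho+t(\zeta-\rho))(\zeta-\rho),\zeta-\rho\rangle\,dt$ and bounding $\int_0^1|\rho+t(\zeta-\rho)|^{p-2}\,dt$ from below by a constant times $(|\zeta|+|\rho|)^{p-2}$. Your handling of the two ranges of $p$ (integrability of $|t-t_0|^{p-2}$ near a zero of the segment for $1<p<2$, and the reversal of the inequalities caused by the sign of $p-2$) is the genuine content, and it is done correctly, with an explicit constant $C=\min\{1,p-1\}\,c(p)$.
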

	
	\noindent Now we shall give the definition of $\it{ weak ~solution}$ of  problem \eqref{main}.
	\begin{definition}\label{p1def 2.4}
		A function $u\in W^{1,p}_0 (\Omega)$ is said to be a weak solution of 
		problem
		\eqref{main},
		 if $u>0$ on $\Omega,$ so
		 that for every $ K \Subset \Omega, $ there exists $\delta>0,$ depending on $K,$ such that $u\ge \delta >0$ on $K$ and for every $\psi\in C_c^1(\Omega),$ we have 
		\begin{equation}\label{p1eq2.1}
			\int_{\Omega } |\nabla_{\mathbb{G}} u|^{p-2} \nabla_{\mathbb{G}}u  \nabla_{\mathbb{G}}\psi dx=\int_{\Omega} f(x)u^{-\eta} \psi dx +\lambda \int_{\Omega} |u|^{q-2}u \psi dx.
		\end{equation}
	\end{definition}
	\noindent Henceforth, by a solution we shall mean a weak solution, defined in the sense of  Definition \ref{p1def 2.4}.
	
	\section{The approximation problem}\label{s3}
	
	In this section we shall study the following approximation problem:
	\begin{align}\label{p1eq3.1}
		\begin{split}
			-\Delta _{p,\mathbb{G}}u=& \frac{f_n}{(u^{+} + \frac{1}{n})^{\eta}} + \lambda |u|^{q-2} u~\text{in}~\Omega,\\
			u=&0 ~~\text{on}~~ \partial\Omega,
		\end{split}
	\end{align}
	where $f_n(x)= \min \{f(x),n\}$, $f\in L_w ^1(\Omega)$(or $ \mathcal{M}^1(\Omega)$), and $0<\eta<1<p<Q$. We shall  subdivide it into two cases in order to prove the existence of solutions of problem \eqref{p1eq3.1}.
	
	\begin{remark}\label{p1rk3.2}
		From Theorem \ref{p1thm2.1} we can  infer that $f\in \mathcal{M}^r,$ for every $r>1,$ which automatically implies that  $ f \in L^r$ for some $r,$ and hence $f$ is a fairly regular function. Thus, a natural upgrade of   problem \eqref{main} is to consider  irregular data, say, $f \in \mathcal{M}^1$.
	\end{remark}
		
	\subsection{Case 1: $\lambda <0, 1<p<q<p^{\ast}$.}
	
	 We begin by proving the following auxiliary lemma.	
	
	\begin{lemma}\label{p1 lemma3.1}
		Let $1<p<Q$ and suppose that $a \in L^{\infty}(\Omega) \setminus \{0\}$ is nonnegative on $\Omega$. Then there exists at least one solution $u\in W^{1,p}_{0} (\Omega)\cap L^{\infty}(\Omega) $  of the following problem  
		\begin{equation}\label{p1eq3.2}
			-\Delta _{p,\mathbb{G}}u= a +\lambda |u|^{q-2}u ~~\text{in}~~ \Omega,~~u>0~~ \text{in} ~~ \Omega,~ u=0~\text{on}~\partial\Omega.
		\end{equation}
		Moreover, for every $K \Subset \Omega$, there 
		exists
		 a constant $\delta(K)$, such that $u\geq \delta(K)>0$ on $K$.
	\end{lemma}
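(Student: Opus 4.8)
The plan is to solve \eqref{p1eq3.2} by a variational argument combined with a truncation and a comparison/barrier step to recover strict positivity and the local lower bound. Since $\lambda<0$ and $1<q<p^*$, the natural energy functional on $W^{1,p}_{\mathbb{G},0}(\Omega)$ is
\begin{equation*}
J(v)=\frac1p\int_\Omega|\nabla_{\mathbb{G}}v|^p\,dx-\int_\Omega a\,v^{+}\,dx-\frac{\lambda}{q}\int_\Omega|v^{+}|^{q}\,dx,
\end{equation*}
where we work with $v^{+}$ so that solutions are automatically nonnegative. First I would check that $J$ is well defined and of class $C^1$ on $W^{1,p}_{\mathbb{G},0}(\Omega)$: the linear term is controlled by $\|a\|_{L^\infty}$ and the embedding of Lemma \ref{p1lemma2.1}, and the $q$-term is controlled by the same embedding since $q<p^*$. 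Because $\lambda<0$, the term $-\frac{\lambda}{q}\int_\Omega|v^{+}|^q = \frac{|\lambda|}{q}\int_\Omega |v^+|^q\ge 0$ \emph{helps} coercivity rather than hurting it; together with $\int_\Omega a v^+\le \|a\|_{L^\infty}|\Omega|^{1-1/p^*}\|v\|_{L^{p^*}}\lesssim \|v\|$ and the Sobolev inequality, one gets $J(v)\ge \frac1p\|v\|^p - C\|v\|\to\infty$, so $J$ is coercive. Weak lower semicontinuity is standard: the gradient term is weakly l.s.c. by convexity, and the lower-order terms are weakly continuous by the compact embedding $W^{1,p}_{\mathbb{G},0}(\Omega)\hookrightarrow L^q(\Omega)$ (and into $L^1$). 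Hence $J$ attains a global minimum at some $u\in W^{1,p}_{\mathbb{G},0}(\Omega)$.

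Next I would extract the Euler--Lagrange equation. Testing $J'(u)=0$ against $-u^{-}$ shows $\int_\Omega|\nabla_{\mathbb{G}}u^{-}|^p=0$, so $u=u^{+}\ge 0$, and then $u$ is a nonnegative weak solution of $-\Delta_{p,\mathbb{G}}u=a+\lambda|u|^{q-2}u$ (note $|u|^{q-2}u=u^{q-1}$ since $u\ge0$). Moreover $u\not\equiv 0$: testing against a fixed nonnegative $\varphi$ with $\int_\Omega a\varphi>0$ (possible since $a\ge0$, $a\not\equiv0$) and using that the functional decreases along $t\varphi$ for small $t>0$ — more precisely $J(t\varphi)=\frac{t^p}{p}\|\varphi\|^p-t\int_\Omega a\varphi+\frac{|\lambda|t^q}{q}\|\varphi\|_{L^q}^q<0$ for $t$ small because the linear term dominates — forces $J(u)<0=J(0)$, hence $u\ne 0$.

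For $L^\infty$-regularity I would run a Stampacchia truncation / De Giorgi iteration: test the equation with $(u-k)^{+}$ for $k\ge 0$, using $a\in L^\infty$ and $0\le u^{q-1}$ bounded on the super-level sets together with the Sobolev embedding of Lemma \ref{p1lemma2.1} to show the super-level sets $\{u>k\}$ have measure decaying to zero fast enough; the standard lemma then gives $\|u\|_{L^\infty(\Omega)}\le M$ for some finite $M$ depending on $\|a\|_{L^\infty}$, $\lambda$, $q$, $|\Omega|$, $Q$, $p$. The subelliptic Sobolev inequality is exactly what powers this iteration in the stratified-group setting, so nothing beyond Lemma \ref{p1lemma2.1} is needed. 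Finally, for the strict positivity and the local bound $u\ge\delta(K)>0$ on each $K\Subset\Omega$: since $-\Delta_{p,\mathbb{G}}u = a+\lambda u^{q-1}\ge a - |\lambda|M^{q-1}$, and more usefully $-\Delta_{p,\mathbb{G}}u\ge -|\lambda|M^{q-1}$ is not enough, I would instead use that $u\ge 0$ solves $-\Delta_{p,\mathbb{G}}u + |\lambda| u^{q-1} = a\ge 0$ with $a\not\equiv 0$, so $u$ is a nonnegative supersolution of $-\Delta_{p,\mathbb{G}}u + c(x)u^{q-1}=0$ with $c=|\lambda|\in L^\infty$; the weak Harnack inequality for the subelliptic $p$-Laplacian on stratified groups (available from the quasi-metric doubling structure, cf. the references already cited for the embeddings) then yields $\inf_K u\ge \delta(K)>0$ on every $K\Subset\Omega$, provided $u\not\equiv 0$, which we have.

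\textbf{Main obstacle.} The delicate point is the strict positivity step: one must be sure that the appropriate weak Harnack / strong maximum principle for $-\Delta_{p,\mathbb{G}}$ is available in the stratified Lie group setting and applies to supersolutions with a bounded zero-order perturbation. This should follow from the doubling property of the Carnot--Carathéodory balls and the Poincaré inequality — the same ingredients behind Lemma \ref{p1lemma2.1} — but it is where the noncommutative geometry genuinely enters, so I would state it carefully (or cite a precise reference) rather than treat it as folklore. The $L^\infty$ bound, by contrast, is a routine De Giorgi iteration once the Sobolev embedding is in hand, and the existence-by-minimization part is entirely standard given coercivity from $\lambda<0$.
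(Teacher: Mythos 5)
Your proposal follows essentially the same route as the paper: minimize the same truncated functional $J$, get coercivity from $\lambda<0$ and weak lower semicontinuity, kill $u^{-}$ by testing the Euler--Lagrange equation, and invoke the strong minimum principle for nonnegative supersolutions of the subelliptic $p$-Laplacian to obtain $u\ge\delta(K)>0$ on compacta. The only differences are that you additionally supply the nontriviality check $J(t\varphi)<0$ and the De Giorgi iteration for the $L^{\infty}$ bound, two steps the paper leaves implicit, so your argument is, if anything, more complete.
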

	\begin{proof}
		We first prove the existence of solutions. To this end,
		 we define the energy functional $J:   W^{1,p}_{0} (\Omega)\cap L^{\infty}(\Omega) \rightarrow \mathbb{R} $ as follows
		  $$ J(u) = \frac{1}{p} \int_{\Omega }|\nabla_{\mathbb{G}}u|^p dx - \int _{\Omega }a(u^+) dx -\frac{\lambda}{q}  \int _{\Omega } (u^+)^qdx.$$
		Since $a\in  L^{\infty}(\Omega)$, we obtain
		 by invoking the Sobolev embedding, 
		\begin{equation*}
			J(u) \geq \frac{1}{p} \| u\|^p -c_1 |\Omega|^{\frac{p-1}{p}} \|a\|_{L^{\infty}(\Omega)} \|u\| -\frac{\lambda}{q} \int_{\Omega} |u|^q dx,
		\end{equation*}
		and since $\lambda <0$, we get the following estimate
		\begin{equation}\label{p1eq3.3}
			J(u) \geq    \frac{1}{p} \| u\|^p -c_1 |\Omega|^{\frac{p-1}{p}} \|a\|_{L^{\infty}(\Omega)} \|u\|.
		\end{equation}
		Hence, we can conclude  that $J$ is coercive.   It is a convex $C^1$ functional and it  is weakly lower semicontinuous. 
		Therefore, $J$ has a minimizer, say $u_0,$ that solves the following equation 
		$$ 	-\Delta _{p,\mathbb{G}}u= a +\lambda |u|^{q-2}u ~~\text{in}~~ \Omega.$$
		We shall now prove that this solution $u_0$ is positive. Clearly,
		 we have 
		\begin{equation}\label{p1eq3.4}
			\int_{\Omega } |\nabla_{\mathbb{G}}\, u|^{p-2} \nabla_{\mathbb{G}}\,u   \nabla_{\mathbb{G}}\,\psi dx= \int_{\Omega} a\psi dx + \lambda\int_{\Omega} |u|^{q-2}u\psi dx, 
		\end{equation}
		for every $\psi\in W_0^{1,p}(\Omega)\cap L^{\infty}(\Omega)$. In particular, on testing \eqref{p1eq3.4} with  $\psi=u^- $, we obtain
		\begin{align}\label{pos1}
			0\ge  -\|u^-\|^p = \int_{\Omega} au^- dx - \lambda\int_{\Omega} |u^-|^q dx>0.
		\end{align}
		For a sufficiently small $\lambda<0$, the right hand side of  equation \eqref{pos1} is positive. Therefore $u^-=0$ a.e. on $\Omega$, and hence $u\geq 0$ a.e. on $\Omega$. The positivity  of $u$ follows by the Strong Minimum Principle for nonnegative super solutions. 
		One can refer to the consequence of   {Vodop'yanov} \cite[Theorem 5]{Vodop'yanov95}.
		 This completes the proof of Lemma \ref{p1 lemma3.1}.
	\end{proof}
	\begin{remark}\label{p1rk3.3}
		Note that the compact embedding from Lemma \ref{p1lemma2.1} also holds for our solution space.
	\end{remark}
	\begin{lemma}\label{p1 lemma3.2}
		For every  $n\in \mathbb{N},$   problem \eqref{p1eq3.1} has a  positive solution $u_n \in  W^{1,p}_{0} (\Omega)\cap L^{\infty}(\Omega)$, and for every  $K \Subset \Omega$, there 
		exists
		 a constant $\delta(K)$ such that $u_n\geq \delta(K)>0$ on $K$. Moreover, $\|u_n\| \leq c_2$, for some $c_2>0,$ independent of n. 
	\end{lemma}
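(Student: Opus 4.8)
The key point is that for each fixed $n$ the truncation removes the singularity: since $0\le f_n\le n$ and $(s^++1/n)^\eta\ge n^{-\eta}$, the datum $f_n/(u^++1/n)^\eta$ is nonnegative and bounded by $n^{1+\eta}$, so \eqref{p1eq3.1} is a \emph{non-singular} problem that can be solved by direct minimization, just as in Lemma \ref{p1 lemma3.1}. On $W^{1,p}_{\mathbb{G},0}(\Omega)$ I would set
$$
J_n(u)=\frac1p\|u\|^p-\int_\Omega G_n(u)\,dx-\frac{\lambda}{q}\int_\Omega|u|^q\,dx,\qquad G_n(s)=\int_0^s\frac{f_n\,dt}{(t^++1/n)^\eta}.
$$
Since $0<1-\eta<1<p\le p^*$, one has $G_n(u)\le\frac{n}{1-\eta}(u^++1/n)^{1-\eta}$, hence $\int_\Omega G_n(u)\le C_n\bigl(1+\|u\|^{1-\eta}\bigr)$ by H\"older and the embedding; together with $\lambda<0$ (so $-\frac{\lambda}{q}\int_\Omega|u|^q\ge0$) this makes $J_n$ coercive. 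Moreover $J_n$ is a convex $C^1$ functional, its leading term is weakly lower semicontinuous, and the other two terms are weakly continuous by the compact embedding $W^{1,p}_{\mathbb{G},0}(\Omega)\hookrightarrow L^r(\Omega)$, $r<p^*$ (Lemma \ref{p1lemma2.1}, using $q<p^*$). So $J_n$ has a global minimizer $u_n\in W^{1,p}_{\mathbb{G},0}(\Omega)$, and $J_n'(u_n)=0$ is precisely the weak form of \eqref{p1eq3.1}.

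Next come positivity and the lower bounds. Testing with $\psi=u_n^-$ and using $\lambda<0$ forces $\|u_n^-\|^p\le0$, hence $u_n\ge0$ a.e.; and $u_n\not\equiv0$ because $f\ge0$, $f\not\equiv0$ forces $f_n=\min\{f,n\}\not\equiv0$ for every $n\ge1$. Rewriting \eqref{p1eq3.1} as $-\Delta_{p,\mathbb{G}}u_n+|\lambda|u_n^{q-1}=f_n/(u_n+1/n)^\eta\ge0$, the function $u_n$ is a nontrivial nonnegative supersolution of a quasilinear operator whose zero-order part vanishes at $0$, so the Strong Minimum Principle in the stratified setting (the consequence of Vodop'yanov \cite[Theorem 5]{Vodop'yanov95} already used for Lemma \ref{p1 lemma3.1}) gives $u_n>0$ in $\Omega$ and, for each $K\Subset\Omega$, a constant $\delta_n(K)>0$ with $u_n\ge\delta_n(K)$ on $K$. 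To make $\delta(K)$ independent of $n$, I would prove $u_m\le u_n$ whenever $m\le n$: subtracting the equations for $u_m$ and $u_n$ tested with $(u_m-u_n)^+\in W^{1,p}_{\mathbb{G},0}(\Omega)$, the $p$-Laplacian contribution is $\ge0$ by Lemma \ref{p1Lucio}, $|\lambda|(u_m^{q-1}-u_n^{q-1})(u_m-u_n)^+\ge0$, while on $\{u_m>u_n\}$ we have $f_m(u_m+\frac1m)^{-\eta}-f_n(u_n+\frac1n)^{-\eta}\le0$ (since $f_m\le f_n$ and $\frac1m\ge\frac1n$), so $(u_m-u_n)^+\equiv0$. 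Hence $u_n\ge u_1$ and $\delta(K):=\inf_K u_1>0$ serves for all $n$. The $L^\infty$ bound is then easy: $\lambda<0$ gives $-\Delta_{p,\mathbb{G}}u_n\le n^{1+\eta}$, so by the weak comparison principle $0\le u_n\le w_n$, where $w_n$ solves $-\Delta_{p,\mathbb{G}}w_n=n^{1+\eta}$ in $\Omega$ with $w_n=0$ on $\partial\Omega$; since $w_n$ is bounded ($\Omega$ bounded, datum constant), $u_n\in L^\infty(\Omega)$.

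It remains to prove the uniform energy bound, and this is the step I expect to be the real obstacle. Testing \eqref{p1eq3.1} with $u_n$ and discarding the (favorable, since $\lambda<0$) superlinear term,
$$
\|u_n\|^p\le\int_\Omega\frac{f_n\,u_n}{(u_n+1/n)^\eta}\,dx\le\int_\Omega f\,u_n^{\,1-\eta}\,dx,
$$
and the plan is to bound the last integral by the generalized H\"older inequality for Marcinkiewicz quasinorms (Theorem \ref{p1thm2.2}), pairing $f\in\mathcal M^1(\Omega)$ with $u_n^{1-\eta}$ and controlling the quasinorm of the latter by $\|u_n\|^{1-\eta}$ through the subcritical embedding (Lemma \ref{p1lemma2.1}) and Theorem \ref{p1thm2.1}, which would give $\|u_n\|^p\le C\|u_n\|^{1-\eta}$ with $C$ independent of $n$, i.e. $\|u_n\|\le c_2$. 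The delicate part is precisely that the datum sits at the borderline $f\in\mathcal M^1$ (barely more than $L^1$), so establishing that $\int_\Omega f\,u_n^{1-\eta}$ is finite and uniformly bounded in $n$ forces one to use the Marcinkiewicz H\"older inequality at the critical pair of exponents together with the subcritical Sobolev embedding and to track the constants there carefully; the monotone comparison $u_m\le u_n$ underpinning the $n$-independent lower bound is the other slightly delicate point, since it rests on the strict monotonicity of $v\mapsto-\Delta_{p,\mathbb{G}}v+|\lambda||v|^{q-2}v$ and on the datum being nonincreasing in $u$. Everything else is a routine adaptation of the Euclidean arguments to the stratified group.
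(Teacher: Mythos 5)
Your route to the existence of $u_n$ is genuinely different from the paper's and is valid. The paper does not minimize the truncated energy: it freezes the singular term at an auxiliary $g\in L^p(\Omega)$, solves the resulting problem \eqref{p1eq3.5} by Lemma \ref{p1 lemma3.1}, and produces $u_n$ as a fixed point of the solution map $F:L^p(\Omega)\to L^p(\Omega)$ via Schauder's theorem (continuity of $F$ through Lemma \ref{p1Lucio}, compactness through an a priori bound). Since the truncation makes the right-hand side a bounded, nonincreasing function of $u$, your functional $J_n$ is indeed convex, coercive and $C^1$, so direct minimization is legitimate and shorter; the fixed-point route is merely more robust (it does not need the variational structure and is reused verbatim in Case~2). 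Your positivity argument (test with $u_n^-$, then the strong minimum principle from Vodop'yanov) matches the paper's. Your monotone comparison $u_m\le u_n$ for $m\le n$, giving $\delta(K):=\inf_K u_1$ independent of $n$, is an addition the paper does not make explicit: the paper only cites Lemma \ref{p1 lemma3.1}, which a priori yields an $n$-dependent constant, yet the $n$-uniformity of $\delta(K)$ is exactly what is used later in the proof of Theorem \ref{p1theorem4.1}. The $L^\infty$ bound by comparison with the torsion-type function is also fine.

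The gap is in the last step, the uniform bound $\|u_n\|\le c_2$, which you yourself flag as the obstacle and leave unproved. You reduce to $\|u_n\|^p\le\int_\Omega f\,u_n^{1-\eta}\,dx$ exactly as in \eqref{p1eq3.11}, but the mechanism you propose to close it cannot work: applying Theorem \ref{p1thm2.2} to the pair $f\in\mathcal M^1(\Omega)$ and $u_n^{1-\eta}\in\mathcal M^{s_2}(\Omega)$ gives a bound on $\|f u_n^{1-\eta}\|_{L^s_w}$ with $\frac1s=1+\frac1{s_2}$, hence $s<1$, and $\mathcal M^s(\Omega)$ with $s\le 1$ does not embed into $L^1(\Omega)$ (the chain in Theorem \ref{p1thm2.1} requires the target exponent to be strictly smaller than the source), so the product quasinorm does not control the integral. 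Indeed, for $f\in\mathcal M^1(\Omega)\setminus L^1_{\mathrm{loc}}(\Omega)$, e.g.\ $f$ behaving like $d(x,x_0)^{-Q}$ near an interior point $x_0$, and $u_n\ge\delta>0$ near $x_0$, the integral $\int_\Omega f u_n^{1-\eta}\,dx$ is infinite, so no inequality of the form $\int_\Omega f u_n^{1-\eta}\,dx\le C\|f\|_{1,w}\|u_n\|^{1-\eta}$ can hold. The paper proceeds differently here: it splits $\Omega=K\cup(\Omega\setminus K)$ and estimates the far-from-$K$ piece via the defining inequality of the $L^1_w$ quasinorm, as in \eqref{p1eq3.12}--\eqref{p1eq3.13}, and the piece on $K$ via the boundedness of $u_n$ there, as in \eqref{p1eq3.14}--\eqref{p1eq3.16}. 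To complete your proof you would need to reproduce an argument of this type, and in doing so you should also verify what the paper leaves implicit, namely that the bound for $u_n$ on $K$ and the constant $C_9$ are independent of $n$.
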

	\begin{proof}
		By  Lemma \ref{p1 lemma3.1}, for a fixed $n\in\mathbb{N},$ and for any $g\in L^p (\Omega), $ there exists
		 a $u_n$
		  satisfying 
		\begin{equation}\label{p1eq3.5}
			\begin{aligned}
				-\Delta _{p,\mathbb{G}}u&= \frac{f_n}{(g^{+} + \frac{1}{n})^{\eta}} + \lambda |u|^{q-2} u~\text{in}~\Omega,\\
				u&=0 ~~on ~~\partial \Omega.
			\end{aligned}
		\end{equation} 
		We define $F: L^p (\Omega) \rightarrow  L^p (\Omega)  $ so that $F(g)=I(u)=u,$ where $u$ is a
		 solution of problem \eqref{p1eq3.5} and $I$ is the inclusion map from $  W^{1,p}_{0} (\Omega)\cap L^{\infty}(\Omega)  $ to $ L^p (\Omega).$ \\	
		 
		 	\noindent
		\textit{\bf Claim:}	$F$ {\it is continuous.}		\\
		Indeed, consider any $g\in L^p(\Omega)$. Then there exists a sequence $\{g_m\}$  in  $ L^p (\Omega)$  such that $g_m \to g $ in $ L^p (\Omega) .$ In order to show 
		that
		$F$ is continuous, we have to prove 
		that $u_m:=F(g_m) \to u=:F(g)$ in $ L^p (\Omega). $ By recalling the properties of the map $F$, we have for every  $\psi \in  W^{1,p}_{0} (\Omega)\cap L^{\infty}(\Omega), $
		\begin{equation}\label{p1eq3.6}
			\int_{\Omega } |\nabla_{\mathbb{G}}u_m|^{p-2} \nabla_{\mathbb{G}}u_m   \cdot\nabla_{\mathbb{G}}\psi dx= \int_{\Omega}  \frac{f_n}{(g_m^{+} + \frac{1}{n})^{\eta}} \psi dx + \lambda\int_{\Omega} |u_m|^{q-2}u_m\psi dx, 
		\end{equation}
		and 
		\begin{equation}\label{p1eq3.7}
			\int_{\Omega } |\nabla_{\mathbb{G}} u|^{p-2} \nabla_{\mathbb{G}}u  \nabla_{\mathbb{G}}\psi dx= \int_{\Omega}  \frac{f_n}{(g^{+} + \frac{1}{n})^{\eta}} \psi dx + \lambda\int_{\Omega} |u|^{q-2}u\psi dx .
		\end{equation}
		We define $$ g_{m,n}: =\left( \left(g_m ^+ + \frac{1}{n}\right)^{-\eta} -\left(g^+ + \frac{1}{n}\right)^{-\eta} \right).$$
		Next, we choose $\psi = (u_m -u)$ in \eqref{p1eq3.6} and \eqref{p1eq3.7}, then subtract the resulting equations and use the Sobolev embedding, to arrive at the following estimate
		\begin{equation}\label{p1eq3.8}
			\begin{aligned}
				&\int_{\Omega } ( |\nabla_{\mathbb{G}} u_m|^{p-2} \nabla_{\mathbb{G}}u_m -  |\nabla_{\mathbb{G}}u|^{p-2} \nabla_{\mathbb{G}}u ) ( \nabla_{\mathbb{G}}(u_m -u))dx \\
				& = \int_{\Omega}  f_n ((g_m^+ + \frac{1}{n})^{-\eta} - (g^+ +\frac{1}{n})^{-\eta}) + \lambda \int_{\Omega} (|u_m|^{q-2}u_m - |u|^{q-2}u) (u_m -u)dx\\
				&\leq n \int_{\Omega} |g_{m,n}| |u_m-u| dx + \lambda \int_{\Omega} (|u_m|^{q-2}u_m - |u|^{q-2}u) (u_m -u)dx\\
				&\leq n \|g_{m,n}\|_{L^{(p^\ast)^\prime}(\Omega)} \|u_m -u\| _{L^{p^\ast}(\Omega)}+ \lambda \int_{\Omega} (|u_m|^{q-2}u_m - |u|^{q-2}u) (u_m -u)dx\\
				&\leq c_3~ n \|g_{m,n}\|_{L^{(p^\ast)^\prime}(\Omega)} \|u_m -u\| + \lambda \int_{\Omega} (|u_m|^{q-2}u_m - |u|^{q-2}u) (u_m -u)dx,\\
			\end{aligned}
		\end{equation}
		where $c_3 >0$ is the Sobolev constant. By using  Lemma \ref{p1Lucio} in \eqref{p1eq3.8}, we can conclude that
		\begin{equation}\label{p1eq3.9}
			\|u_m -u\| \leq c_4 ~n \|g_{m,n}\|_{L^{(p^\ast)^\prime}(\Omega)} ^{\frac{1}{t-1}},
		\end{equation}
		where $t=p$ if $p\geq2,$ and $t=2$ if $p<2.$ We note that $|g_{m,n}| \leq 2 n^{\eta +1}, $ and  up to a subsequence, 
		 $g_{m,n}\to 0,$ as $m \to \infty.$ Thus, using the Lebesgue Dominated Convergence Theorem in \eqref{p1eq3.9}, we can conclude  that $u_m\to u$ in $ L^p (\Omega). $ As a result, $F$ is indeed continuous.\\
		 
		 \noindent
		\textit{\bf Claim:} 	$F$ {\it is  compact.}\\		
		Indeed, on taking $u$ as a test function in \eqref{p1eq3.5} we have,
		$$ \|u\|^p \leq \int_{\Omega} n^{\eta+1} u dx+\lambda \int_{\Omega} |u|^q dx.$$
		Invoking the compact embedding and  the fact that $\lambda <0$, we obtain
		$$ \|u\|^p \leq   C_5 n^{\eta+1} |\Omega|^{\frac{p-1}{p} }\|u\| .$$ Therefore, 
		\begin{equation}\label{p1eq3.10}
			\|u\| \leq C_6,
		\end{equation}
		where $C_6 >0$ is a constant, independent of choice of $g.$ In order to show 
		that $F$ is compact, let $\{g_m\}$ be a bounded sequence in $  L^p (\Omega).$ Then by \eqref{p1eq3.10}, we have $\|F(g_m)\| \leq C_7.$  Therefore, by the compact embedding of $ W^{1,p}_{0} (\Omega)\cap L^{\infty}(\Omega) $ in $ L^p (\Omega), $ there is a subsequence which strongly converges in $ L^p (\Omega) $. Hence, $F$ is indeed compact.		
		
		We now apply the Schauder Fixed Point Theorem, which guarantees the existence of a fixed point $u_n$ which solves problem \eqref{p1eq3.2}. Positivity of the solution follows from Lemma \ref{p1 lemma3.1}.\\
		
		\noindent
		\textit{\bf Claim}: $\{u_n\}$ {\it is uniformly bounded on} $ W^{1,p}_{0} (\Omega)\cap L^{\infty}(\Omega).$ \\		
		Indeed, by choosing $\psi= u_n $ as a test function in \eqref{p1eq3.5}, we obtain
		\begin{equation}\label{p1eq3.11}
			\|u_n\|^p\leq \int_{\Omega} f u_n^{1-\eta}dx + \lambda \int_{\Omega} |u_n|^q dx \leq  \int_{\Omega} f u_n^{1-\eta}dx .
		\end{equation}
		Let  the domain be split into two regions in such a way that $\Omega = K \cup (\Omega\setminus K),$ where $K$ is compact subset of $\Omega$. We have the following estimate for $\int_{\Omega} f u_n^{1-\eta}dx $ near the boundary of $ \Omega \backslash K, $ which follows by the definition of 
		the
		$L^1_w$- norm:
		\begin{equation}\label{p1eq3.12}
			f(x)u^{1-\eta}(x) | \{x:|f(x)|\geq f(x)u^{1-\eta}(x) \}| < \|f\|_{1,w},
		\end{equation}
		where $A_f^u:=| \{x:|f(x)|\geq f(x)u^{1-\eta} \}| $ is the measure of  the set    $$\{x:|f(x)|\geq f(x)u^{1-\eta} \} .$$ On integrating \eqref{p1eq3.12} over $ \Omega \setminus K, $ we obtain 
		\begin{equation*}
			|A_f^u|\int_{\Omega\backslash K} f u^{1-\eta}dx \leq \int_{\Omega\backslash K}  \|f\|_{1,w}dx,
		\end{equation*}
		which further implies
		\begin{equation}\label{p1eq3.13}
			\int_{\Omega\backslash K} f u^{1-\eta}dx \leq C_9 \|f\|_{1,w},
		\end{equation}
		where the constant $ C_9>0$ depends on the compact set $K$ and  the measure of $\Omega.$  Using \eqref{p1eq3.13} in \eqref{p1eq3.11}, we can conclude  that
		\begin{equation}\label{p1eq3.14}
			\begin{aligned}
				\|u_n\|^p &\leq\int_{\Omega} f u_n^{1-\eta}dx \\
				&\leq \int_K f u_n^{1-\eta}dx +\int_{\Omega\backslash K} f u_n^{1-\eta}dx\\
				&\leq \int_K f u_n^{1-\eta}dx+ C_9 \|f\|_{1,w}.
			\end{aligned}
		\end{equation}
		Since $ \{u_n\}\subset  L^{\infty}(\Omega),$ it is bounded
		on a fixed compact set $K\subset \Omega.$
		Furthermore, by Lemma \ref{p1 lemma3.1}, we have $u_n\geq \delta(K)>0.$ Using the positivity of $u_n$ on compact subset $K$ in \eqref{p1eq3.14}, we have
		\begin{equation}\label{p1eq3.15}
			\|u_n\|^p \leq C_{10} \int_K f dx+ C_9 \|f\|_{1,w},
		\end{equation}
		so by applying H\"{o}lder's inequality in \eqref{p1eq3.15}, we get
		\begin{equation}\label{p1eq3.16}
			\|u_n\|^p \leq C_{10} |K| \|f\|_{1,w} +C_9 \|f\|_{1,w},
		\end{equation}  
		therefore, we can conclude  that 
		$$\|u_n\| \leq C_{11},$$
		thereby  proving our claim that $\{u_n\}$ is uniformly bounded on $  W^{1,p}_{0} (\Omega).$ Finally, we have that  $\{u_n\}$ is uniformly bounded on $  W^{1,p}_{0} (\Omega)\cap L^{\infty}(\Omega).$ 
		This completes the proof of Lemma \ref{p1 lemma3.2}.
	\end{proof}
	
	\subsection{Case 2: $\lambda >0, 1<q<p<p^{\ast}.$}
	We shall first prove the following auxiliary lemma.
	\begin{lemma}\label{p1 lemma3.3}
		Let $1<q<p<Q$ and suppose that $b(x) \in L^{\infty}(\Omega) \backslash \{0\}$ is nonnegative on $\Omega$. Then there exists at least one solution $u\in W^{1,p}_{0} (\Omega)\cap L^{\infty}(\Omega) $  of the following  problem  
		\begin{equation}\label{p1eq3.17}
			-\Delta _{p,\mathbb{G}}\,u= b +\lambda |u|^{q-2}u ~~\text{in}~~ \Omega,~~u>0~~ \text{in} ~~ \Omega ~ u=0~\text{on}~\partial\Omega.
		\end{equation}
		Moreover, for every $\omega \subset \Omega$, there
		 exists 
		 a constant $\gamma(\omega)$ such that $u\geq \gamma(\omega)>0$ on $K$.
	\end{lemma}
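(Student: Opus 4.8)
The plan is to run the same variational scheme as in Lemma \ref{p1 lemma3.1}, the essential difference being the mechanism that produces coercivity. Introduce the energy functional $J\colon W^{1,p}_{0}(\Omega)\cap L^{\infty}(\Omega)\to\mathbb R$,
\[
J(u)=\frac1p\int_\Omega|\nabla_{\mathbb G}u|^p\,dx-\int_\Omega b\,u^+\,dx-\frac{\lambda}{q}\int_\Omega(u^+)^q\,dx .
\]
Since now $q<p$, the Sobolev embedding of Lemma \ref{p1lemma2.1} gives $\int_\Omega(u^+)^q\,dx\le c\|u\|^q$, while $b\in L^\infty(\Omega)$ gives $\int_\Omega b\,u^+\,dx\le c'\|b\|_{L^\infty(\Omega)}\|u\|$; hence
\[
J(u)\ge\frac1p\|u\|^p-c'\|b\|_{L^\infty(\Omega)}\|u\|-\frac{\lambda c}{q}\|u\|^q\longrightarrow+\infty\quad\text{as }\|u\|\to\infty ,
\]
so $J$ is coercive. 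Unlike in Lemma \ref{p1 lemma3.1}, $J$ is no longer convex, because the term $-\tfrac\lambda q(u^+)^q$ is concave; but the leading term is weakly lower semicontinuous and the two remaining terms are weakly continuous via the compact embedding $W^{1,p}_{\mathbb G,0}(\Omega)\hookrightarrow L^q(\Omega)$ (recall $q<p<p^*$), so $J$ is weakly lower semicontinuous. Therefore $J$ attains a global minimum at some $u_0$, and being $C^1$, $u_0$ is a weak solution of $-\Delta_{p,\mathbb G}u=b\,\chi_{\{u>0\}}+\lambda(u^+)^{q-1}$ in $\Omega$, $u=0$ on $\partial\Omega$.

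Next I would pin down the sign of $u_0$ and then its strict positivity. Testing the Euler--Lagrange identity with $\psi=u_0^-\in W^{1,p}_0(\Omega)\cap L^\infty(\Omega)$ annihilates the whole right-hand side (both terms are supported on $\{u_0\ge 0\}$), leaving $-\|u_0^-\|^p=0$, so $u_0\ge 0$ a.e. Moreover $u_0\not\equiv 0$: since $b\not\equiv 0$, for $\phi\in C_c^1(\Omega)$ with $\phi>0$ one has $J(t\phi)=\frac{t^p}{p}\|\phi\|^p-t\int_\Omega b\phi-\frac{\lambda t^q}{q}\int_\Omega\phi^q<0$ for $t>0$ small, so $\min J<0$. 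Then the Strong Minimum Principle for nonnegative $p$-supersolutions on $\mathbb G$ (a Harnack-type estimate; cf. the consequence of Vodop'yanov \cite[Theorem 5]{Vodop'yanov95}) yields $u_0>0$ in $\Omega$ and, for every $\omega\Subset\Omega$, a constant $\gamma(\omega)>0$ with $u_0\ge\gamma(\omega)$ on $\omega$. In particular $|\{u_0=0\}|=0$, hence $\chi_{\{u_0>0\}}=1$ and $(u_0^+)^{q-1}=|u_0|^{q-2}u_0$ a.e., so $u_0$ actually solves \eqref{p1eq3.17}. Finally, $u_0\in L^\infty(\Omega)$ follows from a standard Stampacchia/De Giorgi truncation argument (testing with $(u_0-k)^+$ and iterating over level sets), since $b\in L^\infty(\Omega)$ and the reaction exponent is subcritical, $q<p<p^*$; this is exactly the boundedness that is tacitly used in Lemma \ref{p1 lemma3.1}, and can alternatively be quoted from the references in Section \ref{s2}.

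The most delicate point is the passage from the truncated equation (with $\chi_{\{u_0>0\}}$ and $(u_0^+)^{q-1}$) to the untruncated one \eqref{p1eq3.17}: this is legitimate only once $u_0\ge 0$ has been established and then upgraded to $u_0>0$ a.e.\ by the strong minimum principle, so that the truncations become harmless. The same nonnegativity ($|\{u_0=0\}|=0$) is what makes $J$ genuinely differentiable at $u_0$, since $u\mapsto\int_\Omega b\,u^+\,dx$ is only one-sidedly differentiable at sign-changing $u$; if one wishes to be fully rigorous, one either works with one-sided derivatives and the variational inequality $J'(u_0;v)\ge 0$, or truncates the problem below at height $0$ from the outset. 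All the remaining estimates are routine adaptations of Case~1.
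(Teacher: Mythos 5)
Your proposal follows essentially the same route as the paper: direct minimization of the energy functional, coercivity from $q<p$ via the Sobolev embedding, nonnegativity by testing with the negative part, and strict positivity via the strong minimum principle for nonnegative supersolutions. In fact you are more careful than the paper on three points it glosses over --- weak lower semicontinuity must be argued via the compact embedding rather than convexity (the functional is no longer convex when $\lambda>0$), the minimizer must be shown nontrivial, and membership in $L^\infty(\Omega)$ requires a separate truncation argument --- so your additions repair genuine soft spots rather than diverging from the paper's method.
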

	\begin{proof}
		We first  define the energy functional $I$ in the following way, to establish the existence of solutions of
		 \eqref{p1eq3.17}. Let $I:   W^{1,p}_{0} (\Omega)\cap L^{\infty}(\Omega) \rightarrow \mathbb{R} $ be 
		 defined
		  as $$ I(u) = \frac{1}{p} \int_{\Omega }|\nabla_{\mathbb{G}}\,u|^p dx - \int _{\Omega }bu dx -\frac{\lambda}{q}  \int _{\Omega } |u^q|dx.$$
		Since $b\in  L^{\infty}(\Omega)$,  using the Sobolev embedding, we have 
		\begin{equation*}
			I(u) \geq \frac{1}{p} \| u\|^p -C_{12} |\Omega|^{\frac{p-1}{p}} \|b\|_{L^{\infty}(\Omega)} \|u\| -\frac{\lambda}{q} C_{13} \|u\|^q ,
		\end{equation*}
		where $ C_{12}$ and  $C_{13}$ are the Sobolev constants. Since $\lambda >0$, we have the following estimate
		\begin{equation}\label{p1eq3.18}
			I(u) \geq   \|u\|^q (\frac{1}{p}  \| u\|^{p-q} -C_{12} |\Omega|^{\frac{p-1}{p}} \|b\|_{L^{\infty}(\Omega)} \|u\|^{1-q}-\frac{\lambda}{q} C_{13}),
		\end{equation}
		and since $1<q<p$, we can conclude  that $I$ is coercive. 		
		In a similar manner as in  Lemma \ref{p1 lemma3.1}, we can conclude that $I$ has a minimizer $u$ which solves the equation 
		$$ 	-\Delta _{p,\mathbb{G}}\,u= b+\lambda |u|^{q-2}u ~~\text{in}~~ \Omega, u|_{\partial\Omega} =0.$$
		
		We now prove that $u>0$ a.e. in $\Omega.$
		for every  $\psi \in    W^{1,p}_{0} (\Omega)\cap L^{\infty}(\Omega).$
		We have 
		\begin{equation}\label{p1eq3.19}
			\int_{\Omega } |\nabla_{\mathbb{G}}\, u|^{p-2} \nabla_{\mathbb{G}}\,u   \nabla_{\mathbb{G}}\,\psi dx= \int_{\Omega} a\psi dx + \lambda\int_{\Omega} |u|^{q-2}u\psi dx 
		\end{equation}
		Taking $\psi=u^- $ as the test function in \eqref{p1eq3.4}, we get
		$$0 \ge -\|u^-\|^p = \int_{\Omega} bu^- dx + \lambda\int_{\Omega} |u^-|^{q-2}u^- dx,$$
		and since $\lambda>0$, the right hand side of the above equation is positive, which is a contradiction. Therefore $u^-=0$, hence $u\geq 0$ a.e. in $\Omega.$ The positivity  of $u$  
		follows
		 by the Strong Minimum Principle for nonnegative super solutions. 
		 This completes the proof of Lemma \ref{p1 lemma3.3}.
	\end{proof}
	\begin{lemma}\label{p1 lemma3.4}
		For every $n\in \mathbb{N},$ problem \eqref{p1eq3.1} has a  positive solution $u_n \in  W^{1,p}_{0} (\Omega)\cap L^{\infty}(\Omega)$, and for every  $\omega \Subset \Omega$, there 
		exists
		 a constant $\gamma(\omega)$ such that $u_n\geq \gamma(\omega)>0$ on $\omega$. Moreover, $\|u_n\| \leq C_{14}$, for some $ C_{14}>0,$ which is independent of n. 
	\end{lemma}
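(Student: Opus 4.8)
The plan is to mirror the proof of Lemma \ref{p1 lemma3.2}, with the sign manipulations specific to $\lambda<0$ replaced by absorption arguments that exploit $1<q<p$. Fix $n\in\mathbb{N}$. For $g\in L^p(\Omega)$, apply Lemma \ref{p1 lemma3.3} with $b=f_n\,(g^++\tfrac1n)^{-\eta}$, which lies in $L^\infty(\Omega)$ because $0\le b\le n^{1+\eta}$; this produces a solution $u\in W^{1,p}_0(\Omega)\cap L^\infty(\Omega)$ of
$$-\Delta_{p,\mathbb{G}}u=\frac{f_n}{(g^++\tfrac1n)^\eta}+\lambda|u|^{q-2}u\ \text{ in }\Omega,\qquad u=0\ \text{ on }\partial\Omega.$$
Setting $F(g)=u$ and viewing $F\colon L^p(\Omega)\to L^p(\Omega)$ through the inclusion $W^{1,p}_0(\Omega)\cap L^\infty(\Omega)\hookrightarrow L^p(\Omega)$, the goal is to produce a fixed point of $F$ via the Schauder Fixed Point Theorem; such a fixed point solves \eqref{p1eq3.1}, and its positivity together with the local lower bound $u_n\ge\gamma(\omega)>0$ on $\omega\Subset\Omega$ is inherited from Lemma \ref{p1 lemma3.3}.

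For the a priori bound and the compactness of $F$, I would test the frozen equation with $u$ to get $\|u\|^p\le n^{1+\eta}\int_\Omega u\,dx+\lambda\int_\Omega|u|^q\,dx$, bound $\int_\Omega u\,dx\le c\,|\Omega|^{(p-1)/p}\|u\|$ by H\"older's inequality and Lemma \ref{p1lemma2.1}, bound $\int_\Omega|u|^q\,dx\le c\|u\|^q$ by the embedding $W^{1,p}_0(\Omega)\hookrightarrow L^p(\Omega)$, and---since $q<p$---absorb $\lambda c\|u\|^q\le\tfrac12\|u\|^p+C(n,\lambda)$ by Young's inequality. This gives $\|u\|\le C_n$ with $C_n$ independent of $g$, so $F$ carries $L^p(\Omega)$ into $\{v:\|v\|\le C_n\}$, a set which is bounded and, by the compact embedding in Lemma \ref{p1lemma2.1}, relatively compact in $L^p(\Omega)$; in particular a suitable closed bounded convex subset of $L^p(\Omega)$ is mapped into itself, and $F$ is compact.

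For the continuity of $F$, take $g_m\to g$ in $L^p(\Omega)$, put $u_m=F(g_m)$, $u=F(g)$, subtract the weak formulations and test with $u_m-u$ to obtain
\begin{align*}
&\int_\Omega\big(|\nabla_{\mathbb{G}}u_m|^{p-2}\nabla_{\mathbb{G}}u_m-|\nabla_{\mathbb{G}}u|^{p-2}\nabla_{\mathbb{G}}u\big)\cdot\nabla_{\mathbb{G}}(u_m-u)\,dx\\
&\qquad=\int_\Omega f_n\,g_{m,n}\,(u_m-u)\,dx+\lambda\int_\Omega\big(|u_m|^{q-2}u_m-|u|^{q-2}u\big)(u_m-u)\,dx,
\end{align*}
with $g_{m,n}=(g_m^++\tfrac1n)^{-\eta}-(g^++\tfrac1n)^{-\eta}$ and $|g_{m,n}|\le 2n^{\eta}$. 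The difference from \eqref{p1eq3.8} is that, since $\lambda>0$, the last integral is nonnegative and therefore cannot simply be discarded to obtain a quantitative estimate as in \eqref{p1eq3.8}--\eqref{p1eq3.9}. Instead I would fall back on the a priori bound of the previous step: $\{u_m\}$ is bounded in $W^{1,p}_0(\Omega)$, so along a subsequence $u_m\rightharpoonup v$ in $W^{1,p}_0(\Omega)$, $u_m\to v$ in $L^p(\Omega)$ and a.e.; passing to the limit in the weak formulation of $u_m$---using the Dominated Convergence Theorem on the right-hand side, since $|f_n g_{m,n}|\le 2n^{1+\eta}$ and, up to a further subsequence, $g_{m,n}\to0$ a.e.---and invoking the $(S_+)$-property of $-\Delta_{p,\mathbb{G}}$ (equivalently, Lemma \ref{p1Lucio}) upgrades this to $u_m\to v$ strongly in $W^{1,p}_0(\Omega)$ and identifies $v$ as a solution of the frozen problem with datum $g$, i.e.\ $v=F(g)=u$. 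Since the limit does not depend on the subsequence, $u_m\to u$ in $L^p(\Omega)$. Reconciling the unfavourable sign of the $\lambda$-term here---and, relatedly, the fact that the frozen problem need not have a unique solution---is the step I expect to be the main obstacle.

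Granting continuity and compactness, the Schauder Fixed Point Theorem yields a fixed point $u_n\in W^{1,p}_0(\Omega)\cap L^\infty(\Omega)$, and it remains to prove the $n$-uniform bound. Testing \eqref{p1eq3.1} with $u_n$ gives $\|u_n\|^p\le\int_\Omega f\,u_n^{1-\eta}\,dx+\lambda\int_\Omega|u_n|^q\,dx$. Splitting $\Omega=K\cup(\Omega\setminus K)$ with $K$ compact, the integral over $\Omega\setminus K$ is $\le C_9\|f\|_{1,w}$ by the $L^1_w$-distribution estimate used in \eqref{p1eq3.12}--\eqref{p1eq3.13}, the integral over $K$ is $\le C_{10}\int_K f\,dx\le C_{10}|K|\,\|f\|_{1,w}$ since $u_n\in L^\infty(\Omega)$, and---because $q<p$---the term $\lambda\int_\Omega|u_n|^q\,dx\le\lambda c\|u_n\|^q$ is absorbed into $\|u_n\|^p$ via Young's inequality. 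This yields $\|u_n\|\le C_{14}$ with $C_{14}$ independent of $n$, and hence the asserted uniform bound on $W^{1,p}_0(\Omega)\cap L^\infty(\Omega)$.
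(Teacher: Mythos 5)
Your proposal follows the same overall route as the paper: freeze the singular term via Lemma \ref{p1 lemma3.3}, build the map $F$ on $L^p(\Omega)$, establish an a priori bound independent of the datum, apply the Schauder Fixed Point Theorem, and then obtain the $n$-uniform bound by testing with $u_n$ and splitting $\Omega$ into $K$ and $\Omega\setminus K$ exactly as in \eqref{p1eq3.12}--\eqref{p1eq3.13}. The only cosmetic difference in those steps is that you absorb the term $\lambda c\|u\|^q$ by Young's inequality, whereas the paper divides by $\|u\|^q$ and argues by contradiction from $q<p$; these are interchangeable.

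The one place where you genuinely diverge is the continuity of $F$, and there your treatment is more careful than the paper's. The paper simply asserts that continuity is ``very similar'' to Lemma \ref{p1 lemma3.2}, but the estimate \eqref{p1eq3.8}--\eqref{p1eq3.9} discards the term $\lambda\int_\Omega(|u_m|^{q-2}u_m-|u|^{q-2}u)(u_m-u)\,dx$ precisely because that integral is nonnegative and $\lambda<0$; with $\lambda>0$ it has the wrong sign, so the quantitative bound \eqref{p1eq3.9} does not follow and the cross-reference is not literally valid. Your substitute --- use the $g$-independent a priori bound, extract a weakly convergent subsequence, pass to the limit with dominated convergence on the right-hand side, and upgrade to strong convergence via the $(S_+)$-property/Lemma \ref{p1Lucio} --- is the standard repair and is sound as far as it goes. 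The residual issue you flag yourself is real: identifying the limit $v$ with $F(g)$ (and indeed the single-valuedness of $F$) requires uniqueness of solutions of the frozen problem \eqref{p1eq3.20}, which neither you nor the paper establishes for $\lambda>0$; the correct fix is either to prove uniqueness of the frozen problem for $\lambda$ small (so that $|u|^{q-2}u+b$ can be handled by a comparison argument), to select the minimal solution and show the selection is continuous, or to replace Schauder by a fixed point theorem for suitable multivalued maps. So your proposal is essentially the paper's proof with one step done more honestly, and the gap you identify is one the paper shares rather than one you introduced.
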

	\begin{proof}
		By  Lemma \ref{p1 lemma3.3}, for every $n\in\mathbb{N}$ and  $h\in L^p (\Omega),$ there exists
		 $u_n$ such that 
		\begin{equation}\label{p1eq3.20}
			-\Delta _{p,\mathbb{G}}\,u= \frac{f_n}{(h^{+} + \frac{1}{n})^{\eta}} + \lambda |u|^{q-2} u~\text{in}~\Omega, u|_{\partial\Omega} =0 .
		\end{equation} 
		Define $F: L^p (\Omega) \rightarrow  L^p (\Omega)  $ so that $F(h)=I(u)=u,$  $u$ is a
		 solution of \eqref{p1eq3.20}, where $I$ is the inclusion map from $  W^{1,p}_{0} (\Omega)\cap L^{\infty}(\Omega)  $ to $ L^p (\Omega).$ \\
		 
		 \noindent
		\textit{\bf Claim:} $F$ {\it is continuous.}\\
		The proof that $F$ is continuous is very similar to that of  Lemma \ref{p1 lemma3.2}. \\
		 
		 \noindent
		\textit{\bf Claim:} $F$ {\it is  compact.}\\
		Indeed, taking $u$ as a test function in \eqref{p1eq3.20}, we have
		\begin{equation}\label{p1eqex}
			\|u\|^p \leq \int_{\Omega} n^{\eta+1} u dx+\lambda \int_{\Omega} |u|^q dx.
		\end{equation}
		Then by  using the compact embedding on the right side of \eqref{p1eqex}, we obtain
		\begin{equation}\label{p1eq3.21}
			\|u\|^p \leq   C_{15} n^{\eta+1} |\Omega|^{\frac{p-1}{p} }\|u\| +\lambda C_{16}\|u\|^q .
		\end{equation}
		Suppose  that$u$ were unbounded. Then on dividing $\|u\|^q $ in \eqref{p1eq3.21}, we would have
		$$	\|u\|^{p-q} \leq   C_{15} n^{\eta+1} |\Omega|^{\frac{p-1}{p} }\|u\|^{1-q} +\lambda C_{16}, $$ 
		and since $q<p$, we arrive at a contradiction.
		Therefore, 
		\begin{equation}\label{p1eq3.22}
			\|u\| \leq C_{17},
		\end{equation}
		where $C_{17} >0$ is a constant independent of the choice of $h.$ Working along the similar lines as in the proof of Lemma \ref{p1 lemma3.2}, we can conclude  that $F$ is a compact operator.
		 Hence for every  $n\in\mathbb{N},$   problem \eqref{p1eq3.20} admits at least one solution $u_n$.\\
		 
		 \noindent
		\textit{\bf Claim:} $\{u_n\}$ {\it is uniformly bounded on $ W^{1,p}_{0} (\Omega)\cap L^{\infty}(\Omega).$}\\ 
		Indeed, by choosing $\psi =u_n$ as a test function in \eqref{p1eq3.20}, we get
		\begin{equation}\label{p1eq3.23}
			\|u_n\|^p\leq \int_{\Omega} f u_n^{1-\eta}dx + \lambda \int_{\Omega} |u_n|^q dx
		\end{equation}
		and by using the Sobolev embedding in \eqref{p1eq3.23}, we have
		$$	\|u_n\|^p\leq \int_{\Omega} f u_n^{1-\eta}dx + C_{18}\lambda \|u_n\|^q, $$ where $C_{18}>0$ is the Sobolev constant. Now, by using  the idea of splitting  of the domain as  in  the proof of Lemma \ref{p1 lemma3.2}, we arrive at a similar situation as in \eqref{p1eq3.6}. Thus, we have the following estimate
		\begin{equation}\label{p1eq3.24}
			\|u_n\|^p \leq C_{19} \|f\|_{1,w}+  C_{18}\lambda \|u_n\|^q.
		\end{equation}
		Now suppose  that  $	\|u_n\|\to \infty, $ as $n\to \infty.$ Since $q<p$, on dividing \eqref{p1eq3.24} by $ \|u_n\|^q$, we arrive at a contradiction. Therefore, we can conclude  that  $\{u_n\}$ is uniformly bounded on $ W^{1,p}_{0}(\Omega)$, which further implies that $\{u_n\}$ is bounded on $ W^{1,p}_{0} (\Omega)\cap L^{\infty}(\Omega).$ 
		This completes the proof of Lemma \ref{p1 lemma3.4}.
	\end{proof} 
	\begin{remark}\label{p1rk3.1}
		One can observe from  Lemmas \ref{p1 lemma3.2} and \ref{p1 lemma3.4} that $\{u_n\}$ is uniformly bounded on  $ W^{1,p}_{0} (\Omega)\cap L^{\infty}(\Omega).$ So the compact embedding of   $ W^{1,p}_{0} (\Omega)\cap L^{\infty}(\Omega)$ in $L^{p}(\Omega) $ guarantees that up to a subsequence, $u_n \to u_\eta,$ as $n\to \infty , $ where $u_\eta$ is the pointwise limit of $u_n$ in $L^{p}(\Omega).$
	\end{remark}
	\section{Proof of Theorem \ref{p1theorem4.1}}\label{s4}
	In this section we shall
	prove the strong convergence of the sequence of solutions of the approximation problems   \eqref{p1eq3.5} and  \eqref{p1eq3.20}, where the limit of these solutions gives the solution of our main problem  \eqref{main} for both of  these cases. 
	\begin{lemma}\label{p1 lemma4.1}
		Let $ \{u_n\}$ be a sequence of solutions of the approximation problem \eqref{p1eq3.5}, given by  Lemma \ref{p1 lemma3.2}. By Remark \ref{p1rk3.1}, if $ u_\infty$ is the pointwise limit of $\{u_n\},$ then up to a subsequence,
		\begin{equation}\label{p1eq4.1}
			u_n \to u_\infty~~~ strongly ~in ~ W^{1,p}_{0} (\Omega)\cap L^{\infty}(\Omega).
		\end{equation}
	\end{lemma}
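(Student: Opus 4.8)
The plan is to upgrade the weak convergence supplied by the uniform energy bound to strong convergence in $W^{1,p}_{\mathbb{G},0}(\Omega)$ by a monotonicity argument built on Lemma \ref{p1Lucio}, the only genuinely delicate point being the passage to the limit in the singular lower order term.

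First I would record the consequences of the bound $\|u_n\|\le c_2$ of Lemma \ref{p1 lemma3.2} (resp.\ Lemma \ref{p1 lemma3.4}) noted in Remark \ref{p1rk3.1}: up to a subsequence $u_n\rightharpoonup u_\infty$ in $W^{1,p}_{\mathbb{G},0}(\Omega)$, $u_n\to u_\infty$ in $L^r(\Omega)$ for every $1\le r<p^\ast$ by the compact embedding of Lemma \ref{p1lemma2.1}, $u_n\to u_\infty$ a.e.\ in $\Omega$, and $u_\infty\in L^\infty(\Omega)$. Since on each $K\Subset\Omega$ we have $u_n\ge\delta(K)>0$ uniformly in $n$, the a.e.\ limit satisfies $u_\infty\ge\delta(K)>0$ on $K$, so $u_\infty>0$ a.e.\ in $\Omega$ and $u_\infty^{-\eta}$ is finite a.e.

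Next I would use $\varphi_n:=u_n-u_\infty\in W^{1,p}_{\mathbb{G},0}(\Omega)\cap L^\infty(\Omega)$ as a test function in the weak formulation of \eqref{p1eq3.5} (resp.\ \eqref{p1eq3.20}), which gives
\begin{equation*}
\int_\Omega|\nabla_{\mathbb{G}}u_n|^{p-2}\nabla_{\mathbb{G}}u_n\cdot\nabla_{\mathbb{G}}\varphi_n\,dx=\int_\Omega\frac{f_n}{(u_n+\frac1n)^\eta}\,\varphi_n\,dx+\lambda\int_\Omega|u_n|^{q-2}u_n\,\varphi_n\,dx .
\end{equation*}
The term $\lambda\int_\Omega|u_n|^{q-2}u_n\varphi_n\,dx$ tends to $0$, because $|u_n|^{q-1}$ is bounded in $L^{q/(q-1)}(\Omega)$ while $\varphi_n\to0$ in $L^q(\Omega)$ (here $q<p^\ast$). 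For the singular term I would split $\Omega=K\cup(\Omega\setminus K)$ with $K\Subset\Omega$: on $K$ the uniform lower bound yields $0\le\frac{f_n}{(u_n+1/n)^\eta}\le\delta(K)^{-\eta}f$, and together with $\varphi_n\to0$ a.e.\ and the a priori integrability estimates of the type \eqref{p1eq3.13} one obtains $\int_K\frac{f_n}{(u_n+1/n)^\eta}\varphi_n\,dx\to0$; on $\Omega\setminus K$ one controls $\int_{\Omega\setminus K}\frac{f_n}{(u_n+1/n)^\eta}|\varphi_n|\,dx$ by the same weak-$L^1$ bound uniformly in $n$, so this contribution is made arbitrarily small by letting $K\uparrow\Omega$. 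Since moreover $|\nabla_{\mathbb{G}}u_\infty|^{p-2}\nabla_{\mathbb{G}}u_\infty\in L^{p/(p-1)}(\Omega)$ and $u_n\rightharpoonup u_\infty$, also $\int_\Omega|\nabla_{\mathbb{G}}u_\infty|^{p-2}\nabla_{\mathbb{G}}u_\infty\cdot\nabla_{\mathbb{G}}\varphi_n\,dx\to0$, so subtracting we arrive at
\begin{equation*}
\int_\Omega\bigl(|\nabla_{\mathbb{G}}u_n|^{p-2}\nabla_{\mathbb{G}}u_n-|\nabla_{\mathbb{G}}u_\infty|^{p-2}\nabla_{\mathbb{G}}u_\infty\bigr)\cdot\nabla_{\mathbb{G}}(u_n-u_\infty)\,dx\longrightarrow0 .
\end{equation*}

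To finish, I would invoke Lemma \ref{p1Lucio}: when $p\ge2$ this last limit dominates $C\int_\Omega|\nabla_{\mathbb{G}}(u_n-u_\infty)|^p\,dx$, and when $1<p<2$ one writes $\|\nabla_{\mathbb{G}}(u_n-u_\infty)\|_{L^p}^p$ as a H\"older product of $\bigl(\int_\Omega(|\nabla_{\mathbb{G}}u_n|+|\nabla_{\mathbb{G}}u_\infty|)^{p-2}|\nabla_{\mathbb{G}}(u_n-u_\infty)|^2\,dx\bigr)^{p/2}$, which tends to $0$, and $\bigl(\int_\Omega(|\nabla_{\mathbb{G}}u_n|+|\nabla_{\mathbb{G}}u_\infty|)^p\,dx\bigr)^{(2-p)/2}$, which is bounded by the energy estimate. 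In both cases $u_n\to u_\infty$ in $W^{1,p}_{\mathbb{G},0}(\Omega)$, and together with the uniform $L^\infty$ bound this yields \eqref{p1eq4.1}. I expect the genuine obstacle to be the limit in the singular term near $\partial\Omega$: there $\{u_n\}$ admits no uniform positive lower bound and $f$ belongs only to the Marcinkiewicz class $\mathcal{M}^1(\Omega)$, so the limit cannot be taken by plain domination and must instead rely on the uniform weak-$L^1$ estimates of Section \ref{s3} combined with an equiintegrability (Vitali-type) argument.
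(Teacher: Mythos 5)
Your proposal is correct, but it reaches \eqref{p1eq4.1} by a genuinely different mechanism than the paper. The paper tests \eqref{p1eq3.5} with $\psi=u_n$ to get the energy identity $\|u_n\|^p=\int_\Omega f_n(u_n+\tfrac1n)^{-\eta}u_n\,dx+\lambda\int_\Omega|u_n|^q\,dx$, passes to the limit on the right, tests the limiting equation \eqref{p1eq4.4} with $\psi=u_\infty$ to obtain the same value for $\|u_\infty\|^p$, and then upgrades weak convergence plus convergence of norms to strong convergence via the uniform convexity (Radon--Riesz property) of the space; Lemma \ref{p1Lucio} plays no role there. You instead test with $\varphi_n=u_n-u_\infty$, kill the right-hand side and the term $\int_\Omega|\nabla_{\mathbb{G}}u_\infty|^{p-2}\nabla_{\mathbb{G}}u_\infty\cdot\nabla_{\mathbb{G}}\varphi_n\,dx$ in the limit, and then use the monotonicity inequality of Lemma \ref{p1Lucio} with the standard case split $p\ge2$ versus $1<p<2$. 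Both routes stand or fall on the same delicate step --- passing to the limit in $\int_\Omega f_n(u_n+\tfrac1n)^{-\eta}(\cdot)\,dx$ with $f$ only in $\mathcal{M}^1(\Omega)$ --- which the paper treats rather tersely and you address more explicitly via the interior lower bound $u_n\ge\delta(K)$ and the weak-$L^1$ estimates of type \eqref{p1eq3.13} combined with a Vitali argument; that is the more honest treatment. Your route also has a structural advantage: the paper's proof invokes, mid-argument, the fact that $u_\infty$ satisfies the weak formulation \eqref{p1eq4.4}, even though that fact is only established later in the proof of Theorem \ref{p1theorem4.1} (which itself uses Lemma \ref{p1 lemma4.1}); your monotonicity argument never needs the limit equation, so it avoids this circularity. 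What the paper's approach buys in exchange is brevity and the avoidance of the two-case H\"older manipulation for $1<p<2$.
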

	\begin{proof}
		Since  $ \{u_n\}$  is the solution of \eqref{p1eq3.5},  we can conclude  from  Remark \ref{p1rk3.1} that
		$ u_n \rightharpoonup u_\infty$ in $ W^{1,p}_{0} (\Omega)\cap L^{\infty}(\Omega) $ and $ u_n \to u_\infty$ in $L^s(\Omega)~ \text{for}~ 1\leq s<p^\ast.$ On testing the weak formulation of problem \eqref{p1eq3.5}
		with $\psi=u_n,$ we get
		
		$$ \|u_n\|^p =\int_{\Omega}  \frac{f_n}{(u_n + \frac{1}{n})^{\eta}} u_n dx+\lambda\int_{\Omega} |u_n|^q dx, $$ 
		and letting $n\to \infty$, we have 
		\begin{equation}\label{p1eq4.3}
			\lim _{n\to\infty}  \|u_n\|^p = \int_{\Omega} f u_\infty ^{1-\eta} dx +\lambda \int_{\Omega} u_\infty ^q dx,
		\end{equation}
		so since $ u_n \to u_\infty$ in $L^s(\Omega) $, $u_\infty$ satisfies the weak formulation. Thus, we have the following 
		\begin{equation}\label{p1eq4.4}
			\int_{\Omega } |\nabla_{\mathbb{G}}\, u_\infty|^{p-2} \nabla_{\mathbb{G}}\,u_\infty  \cdot \nabla_{\mathbb{G}}\,\psi dx = \int_{\Omega} fu_\infty^ {-\eta} \psi dx + \lambda\int_{\Omega} |u_\infty|^{q-2}u_\infty\psi dx.
		\end{equation}
		Choosing $\psi= u_\eta $ as a test function in \eqref{p1eq4.4}, we obtain
		\begin{equation}\label{p1eq4.5}
			\|u_\infty\|^p =\int_{\Omega} f u_\infty ^{1-\eta} dx +\lambda \int_{\Omega} u_\infty ^q dx,
		\end{equation}
		thus, by virtue of  \eqref{p1eq4.3} and \eqref{p1eq4.5} along with the $\it{ uniform~ convexity}$ of  $W^{1,p}_{0} (\Omega)\cap L^{\infty}(\Omega),$ we can conclude  that 
		$$ 	\lim _{n\to\infty}  \|u_n\|^p= \|u_\infty\|^p.$$ Therefore, \eqref{p1eq4.1} follows. 
		This completes the proof of Lemma \ref{p1 lemma4.1}.
	\end{proof}
	\begin{remark} With a similar argument as in the proof of  Lemma \ref {p1 lemma4.1}, one can  prove the strong convergence of the solutions of $\{u_n\}$ of \eqref{p1eq3.20}.
	\end{remark}
	
	Next, we shall prove the existence of solutions of problem  \eqref{main} for both the sublinear and superlinear cases and an additional restriction on $\lambda.$ 
	
	\begin{proof}[Proof of Theorem \ref{p1theorem4.1}]
		For every  $n\in \mathbb{N},$ Lemmas \ref{p1 lemma3.2} and  \ref{p1 lemma3.4} guarantee the existence of $ u_n \in W^{1,p}_{0} (\Omega)\cap L^{\infty}(\Omega) $ which obeys
		\begin{equation}\label{p1eq4.6}
			\int_{\Omega } |\nabla_{\mathbb{G}}\, u_n|^{p-2} \nabla_{\mathbb{G}}\,u_n   \nabla_{\mathbb{G}}\,\psi dx= \int_{\Omega}  \frac{f_n}{(u_n + \frac{1}{n})^{\eta}} \psi dx + \lambda\int_{\Omega} |u_n|^{q-2} u_n \psi dx, \hbox{ for every } \psi \in C^1_c (\Omega).
		\end{equation}
		Using the strong convergence derived in Lemma \ref{p1 lemma4.1}, we have up to a subsequence, $\nabla_{p,\mathbb{G}} u_n \to \nabla_{p,\mathbb{G}} u_\infty $ as $n \to \infty$ pointwise a.e. in $\Omega.$ Then by passing to the limit, we have 
		\begin{equation}\label{p1eq4.7}
			\lim _{n \to \infty} 	\int_{\Omega } |\nabla_{\mathbb{G}}\, u_n|^{p-2} \nabla_{\mathbb{G}}\,u_n  \cdot \nabla_{\mathbb{G}}\,\psi dx= 	\int_{\Omega } |\nabla_{\mathbb{G}}\, u_\infty|^{p-2} \nabla_{\mathbb{G}}\,u_\infty\cdot   \nabla_{\mathbb{G}}\,\psi dx.
		\end{equation}
		Suppose that supp $\psi =K$, where $K\subset \Omega$ is compact. Then by Lemma \ref{p1 lemma3.2} (or \ref{p1 lemma3.4}), there exists a constant $\delta(K)$ such that $u_n\geq \delta(K) >0$ in $K.$ Thus, $u_\infty \ge  \delta(K) >0 $ on $ K$, and we also have
		$$ \frac{f_n}{u_n^{\eta}} \psi \leq \frac{f}{\delta(K)^\eta} \|\psi\|_{L^\infty (\Omega)}.$$ Now as mentioned in Remark \ref{p1rk3.1}, we can use the pointwise convergence of $u_n\to u_\infty$ a.e. in $\Omega$ and the Lebesgue Dominated Convergence Theorem to conclude
		\begin{equation}\label{p1eq4.8}
			\lim _{n \to \infty}\left( \int_{\Omega}  \frac{f_n}{(u_n + \frac{1}{n})^{\eta}} \psi dx + \lambda\int_{\Omega} |u_n|^{q-2} u_n \psi dx\right)= \int_{\Omega}  \frac{f}{(u_\infty )^{\eta}} \psi dx + \lambda\int_{\Omega} |u_\infty|^{q-2} u_\infty \psi dx
		\end{equation}
		and invoke \eqref{p1eq4.7} and \eqref{p1eq4.8}  in \eqref{p1eq4.6} to conclude that $u_\infty $ is a weak solution of
		problem
		 \eqref{main}, for both  considered cases, i.e. $\lambda<0,1<p<q$ and $ \lambda>0, 1<q<p.$ \\
		We shall prove the uniqueness of solutions for the case when $\lambda<0, 1<p<q.$
		Suppose the obtained solution is not unique, i.e., let $u_1, u_2$ be two solutions of \eqref{main}. Then choosing $\psi =(u_1- u_2)^+$ as a test function in \eqref{p1eq2.1}, we get
		\begin{equation}\label{p1eq4.9}
			\int_{\Omega} |\nabla_{\mathbb{G}} u_1|^{p-2} \nabla_{\mathbb{G}} u_1 \nabla_{\mathbb{G}}(u_1 -u_2)^+ dx = \int_{\Omega} f u_1^{-\eta} (u_1 -u_2)^+ dx + \lambda \int_{\Omega} |u_1|^{q-2}u_1 (u_1 -u_2)^+ dx,
		\end{equation}
		\begin{equation}\label{p1eq4.10}
			\int_{\Omega} |\nabla_{\mathbb{G}} u_2|^{p-2} \nabla_{\mathbb{G}} u_2 \nabla_{\mathbb{G}}(u_1 -u_2)^+ dx = \int_{\Omega} f u_2^{-\eta} (u_1 -u_2)^+ dx + \lambda \int_{\Omega} |u_2|^{q-2}u_2 (u_1 -u_2)^+ dx.
		\end{equation}
		On subtracting  \eqref{p1eq4.10} from\eqref{p1eq4.9}, we obtain
		\begin{equation}\label{p1eq4.11}
			\begin{aligned}
				&	\int_{\Omega} (|\nabla_{\mathbb{G}} u_1|^{p-2} \nabla_{\mathbb{G}} u_1  - |\nabla_{\mathbb{G}} u_2|^{p-2} \nabla_{\mathbb{G}} u_2 )\cdot (\nabla_{\mathbb{G}}(u_1 -u_2)^+) dx \\
				& = \int_{\Omega}  f (u_1^{-\eta}- u_2^{-\eta}) (u_1 -u_2)^+ dx +\lambda \int_{\Omega} (|u_1|^{q-2}u_1-|u_2|^{q-2}u_2 )(u_1 -u_2)^+ dx \le 0.
			\end{aligned}
		\end{equation}
		Thus, by Lemma \ref{p1Lucio}, we have $u_1\le u_2$  a.e. in $\Omega.$  Furthermore, on subtracting \eqref{p1eq4.9} from \eqref{p1eq4.10} with $ \psi=( u_2 -u_1)^+$ as a test function, we obtain by a similar argument that $u_2\leq u_1$ a.e. in $\Omega.$ Therefore, the uniqueness follows.
		 This completes the proof of Theorem \ref{p1theorem4.1}.	\end{proof}
	\begin{remark}
		Consider the following problem
		\begin{equation}\tag{Q}\label{main1}
			\left\{\begin{aligned}
				-\Delta _{p,\mathbb{G}}\,u&= f(x)|u|^{-\eta-1}u + \lambda |u|^{q-2} u~\text{in}~\Omega,\\
				u&=0~\text{on}~\partial\Omega.
			\end{aligned}\right.
		\end{equation}
		A careful observation reveals that the solution of
		problem 
		 \eqref{main} is also a solution of problem 
		\eqref{main1}. This  implies that $-u$ is also a solution of problem  \eqref{main1}. Therefore,   problem \eqref{main1} has at least two solutions. One possible direction to investigate at this stage is under what conditions does   problem \eqref{main1} have infinitely many solutions - this will lead us to the next section.  
		
	\end{remark}
	\section{Proof of  Theorem \ref{p1thm5.2}}\label{s5}
	It will be shown in this section that there
	 exist
	  infinitely many solutions of  problem \eqref{main1}, for every $\lambda>0,1<q<p,$ using the variational methods. 
	Here, the idea is to use the Symmetric Mountain Pass Theorem in some restricted domain of $\Omega$ to guarantee the existence of distinct infinitely many weak solutions.
		\noindent  In order to use the symmetric mountain pass geometry, we shall refer to Kajikiya \cite[Definition 1.1]{Kajikiya} for the definition of genus.
		Further, we recall the set,
		$$ \Gamma_k = \{S_k\subset X:S_k  ~\text{is closed and symmetric, } 0 \notin S_k,\text{ and } \gamma(S_k)\geq k \},$$ where $X$ is a Banach space and $S\subset X.$ The set $ \Gamma_k$ is the collection of closed and symmetric set around $0,$ with genus at least $k.$ 
	
	\begin{remark}
		According to Rabinowitz \cite{Rabinowitz}, roughly speaking,  genus is a tool for measuring the size of symmetric sets.
	\end{remark}
	\begin{definition}\label{p1def5.2}
		Let $X$ be any Banach space, $J\in C^1(X,\mathbb{R}),$ and $c\in \mathbb{R}.$ The function $J$  is said to satisfy the  $(PS)_{c}$ condition if every  sequence $(u_n)\subset X$ such that  $J(u_n)\rightarrow c$ and $J'(u_n)\rightarrow 0,$ as $n\rightarrow\infty,$ has a convergent subsequence.
	\end{definition}
	\noindent As a consequence, if there exists a $(PS)_c$ subsequence, still denoted by $(u_n)$, which converges strongly on $ W^{1,p}_{0} (\Omega)\cap L^{\infty}(\Omega) $, then $J$ is said to satisfy the $PS$ condition.\\
	
	\noindent Here, the following version of the Symmetric Mountain Pass Theorem by Kajikiya \cite{Kajikiya} will be considered.
	\begin{theorem}\label{p1thm5.1}
		Let $X$ be an infinite-dimensional Banach space and $\hat{J}$  a $ C^1$ functional on $X$ that satisfies the following conditions\\
		(i)  $\hat{J}$ is even, bounded below, $\hat{J}(0)= 0,$ and  $\hat{J}$ satisfies the $(PS)_c$ condition.\\
		(ii) For every  $k\in\mathbb{N},$ there exists  $S_k\in \Gamma_k$ such that $ \underset{u\in S_k}{\sup} \hat{J}(u)<0.$\\
		Then for every $k\in \mathbb{N}, c_k = \underset{S\in \Gamma_k}{\inf }~ \underset{{u\in S}}{\sup}~ \hat{J}(u)<0 $ is a critical value of $\hat{J}.$
	\end{theorem}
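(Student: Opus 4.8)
The plan is to run the standard $\mathbb{Z}_2$-equivariant minimax/deformation machinery on the levels $c_k$. First I would verify that each $c_k$ is a well-defined real number in $(-\infty,0)$: since $\hat J$ is bounded below, say $\hat J\ge -M$ on $X$, every $S\in\Gamma_k$ (nonempty, as $k\le\gamma(S)$) satisfies $\sup_{S}\hat J\ge -M$, so $c_k\ge -M$, while testing the infimum defining $c_k$ against the set $S_k$ from hypothesis (ii) gives $c_k\le\sup_{S_k}\hat J<0$. I would also record the monotonicity $c_k\le c_{k+1}$: a closed symmetric set avoiding the origin of genus $\ge k+1$ automatically has genus $\ge k$, so $\Gamma_{k+1}\subset\Gamma_k$ and the infima are ordered.

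The core is an auxiliary lemma. Write $K_c:=\{u\in X:\hat J(u)=c,\ \hat J'(u)=0\}$ for the critical set at level $c$; for $c<0$ this set is symmetric (evenness of $\hat J$ makes $\hat J'$ odd), avoids $0$ (since $\hat J(0)=0\ne c$) and is compact by $(PS)_c$, so its Krasnoselskii genus is finite. The lemma I would prove is: if $c<0$ and $c=c_k=c_{k+1}=\cdots=c_{k+p}$ for some integer $p\ge 0$, then $\gamma(K_c)\ge p+1$. Granting this, the theorem follows by taking $p=0$: then $\gamma(K_{c_k})\ge 1$, so $K_{c_k}\ne\emptyset$, i.e. $c_k$ is a critical value of $\hat J$, and it is strictly negative by the previous paragraph. (For $p\ge 1$ the same lemma gives infinitely many critical points at level $c$, since a symmetric set not containing $0$ of genus $\ge 2$ is infinite.)

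To prove the lemma I would argue by contradiction, assuming $\gamma(K_c)\le p$. By the neighbourhood property of the genus (valid as $K_c$ is compact, symmetric, $0\notin K_c$) there is an open symmetric $N\supset K_c$ with $\gamma(\overline N)\le p$. Applying the equivariant quantitative deformation lemma at level $c$ — using that $\hat J$ is $C^1$ and even and satisfies $(PS)_c$ — I obtain $\varepsilon\in(0,-c)$ and an odd homeomorphism $\eta$ of $X$ with $\eta(\hat J^{c+\varepsilon}\setminus N)\subset\hat J^{c-\varepsilon}$, where $\hat J^{a}:=\{u\in X:\hat J(u)\le a\}$. Since $c_{k+p}=c$, pick $S\in\Gamma_{k+p}$ with $S\subset\hat J^{c+\varepsilon}$; then $W:=\overline{S\setminus N}$ is closed, symmetric, contained in $S$ (hence $0\notin W$), and from $S\subset W\cup\overline N$ and subadditivity of the genus, $\gamma(W)\ge\gamma(S)-\gamma(\overline N)\ge(k+p)-p=k$, so $W\in\Gamma_k$. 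As $N$ is open, $W\subset\hat J^{c+\varepsilon}\setminus N$, hence $\eta(W)\subset\hat J^{c-\varepsilon}$, while $\eta(W)\in\Gamma_k$ because an odd homeomorphism preserves closedness, symmetry, the property of avoiding $0$, and the genus. Therefore $c_k\le\sup_{\eta(W)}\hat J\le c-\varepsilon<c=c_k$, a contradiction.

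The step I expect to be the main obstacle is the equivariant deformation lemma in the Banach space $X$ (which in the application is not Hilbert): one must construct an odd, locally Lipschitz pseudo-gradient vector field for $\hat J$ via Palais' partition-of-unity construction, symmetrize it using evenness of $\hat J$, truncate it away from $K_c$, and integrate the resulting flow, the $(PS)_c$ condition being exactly what supplies the uniform lower bound on $\|\hat J'\|$ on $\hat J^{c+\varepsilon}\setminus N$ that makes the flow carry this sublevel set below level $c-\varepsilon$ in finite time. The only other ingredient is the genus neighbourhood property used above; both are classical, and for a general Banach space I would simply invoke the standard treatments of the symmetric mountain pass theorem already cited in the paper.
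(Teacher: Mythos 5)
The paper does not prove Theorem \ref{p1thm5.1} at all: it is quoted verbatim as a known result of Kajikiya \cite{Kajikiya} (see also Rabinowitz \cite{Rabinowitz}), so there is no in-paper proof to compare against. Your argument is the standard and correct one, and it is essentially the proof found in those sources: the bounds $-\infty<c_k\le\sup_{S_k}\hat J<0$ from boundedness below and hypothesis (ii), the multiplicity lemma $\gamma(K_c)\ge p+1$ when $c=c_k=\cdots=c_{k+p}<0$, and its proof by contradiction via the neighbourhood property of the genus, the equivariant deformation lemma, and the monotonicity/subadditivity/invariance properties of the genus applied to $W=\overline{S\setminus N}$. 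All the individual steps check out (in particular $0\notin K_c$ because $\hat J(0)=0\ne c$, and $\eta(W)\in\Gamma_k$ because odd homeomorphisms preserve genus); the two ingredients you flag as classical --- the odd pseudo-gradient flow on a Banach space and the genus neighbourhood property for compact symmetric sets avoiding the origin --- are indeed the only nontrivial imports, and invoking them is exactly what the cited treatments do. The one cosmetic remark is that your lemma yields slightly more than the stated theorem (infinitely many critical points when levels coincide), which is also part of Kajikiya's original statement though not of the version reproduced here.
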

	For more details on various versions of the Mountain Pass Theorem, we refer to Youssef \cite{Jabri}.  
	
	To apply Theorem \ref{p1thm5.1}, we shall follow the technique from Ghosh-Choudhuri \cite{Ghosh}, for our problem \eqref{main} for the case $1<q<p$.
	On modifying \eqref{main} as follows
	\begin{equation}\label{p1eq5.1}
		\left\{\begin{aligned}
			-\Delta _{p,\mathbb{G}}\,u&= f(x) \frac{sign(u)}{|u|^\eta} + \lambda |u|^{q-2}
			u~\text{in}~\Omega\\
			u&=0~\text{on}~\partial\Omega,
		\end{aligned}\right.
	\end{equation}
	a careful observation reveals that the weak solution of  problem \eqref{p1eq5.1} is also a  weak solution of
	 problem \eqref{main1}. We use  the cutoff technique to guarantee the existence of infinitely many solutions of  \eqref{p1eq5.1} under suitable assumptions on $u$ given in
	  Ghosh et al.
	  \cite{Ghosh}, which are as follows:\\ 
	(a) There
	 exists
	 $\delta >0$ such that $\hbox{ for every } x\in\Omega$, $|u|\le \delta.$ \\
	 (b) There
	 exist
	   $r>0$ and  $\beta \in (1 - \eta, 2)$ such that,
	for every $x$ in $ \Omega$ and  $|u|<r$, we have $|u|^q\le \beta |u|^q.$
	
	By choosing $i$ so that $0<i<\frac{1}{2} \min\{\delta,r\},$ the cutoff function is defined as $ \phi : \mathbb{R}\to \mathbb{R}^+$ so that $0\leq \phi(t)\leq 1$ and 
	$$  \phi(t)=	\left\{\begin{aligned}
		&1,~~~~ \text{if}~ |t|\leq i\\
		&\phi~ \text{is decreasing, if}~~ i\leq t\leq 2i\\
		&0, ~~~~\text{if}~ |t|\geq 2i.
	\end{aligned}\right.$$
	Indeed, at first, we have  to prove  the existence of solutions of the following cutoff problem
	\begin{equation}\label{p1eq5.2}
		-\Delta _{p,\mathbb{G}}u= f(x) \frac{\hbox{sign}(u)}{|u|^\eta} + \lambda |u|^{q-2}u \phi(u),
	\end{equation}
	with $\|u\|_{L^\infty}\leq i.$ By Theorem \ref{p1theorem4.1}, the  solution of 
	problem 
	\eqref{main}  is positive. We only need to show 
	 that $u$ is bounded by $ i$. \\
	The energy functional $\hat{J}: W^{1,p}_{0} (\Omega)\cap L^{\infty}(\Omega)\to \mathbb{R} $ associated   to   problem \eqref{p1eq5.2} is defined as
	\begin{equation}\label{p1eq5.3}
		\hat{J}(u)= \frac{1}{p} \int_{\Omega } |\nabla_{\mathbb{G}}u|^p dx- \frac{1}{1-\eta} \int_{\Omega } f|u|^{1-\eta} dx -\lambda\int_{\Omega } G(u)dx,
	\end{equation}
	where $G(t)=\int_0^t |s|^{q-2}s \phi(s) ds.$
	We now prove the  following auxiliary lemma.
	\begin{lemma}\label{p1 lemma5.1}
		For every  $0<\eta<1,$  let us define
		$$\Lambda= \inf\{\lambda>0: \text{problem \eqref{main} has no weak solutions}\}.$$
		Then $ 0\leq \Lambda<\infty.$
	\end{lemma}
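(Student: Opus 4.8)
The bound $\Lambda\ge 0$ is immediate, since the set $A:=\{\lambda>0 : \eqref{main}\ \text{has no weak solution}\}$ is contained in $(0,\infty)$, so $\inf A\ge 0$ (with the convention $\inf\emptyset=+\infty$). Hence the content of the lemma is $\Lambda<\infty$, i.e.\ that $A\neq\emptyset$: I must produce one $\lambda^{*}>0$ for which the problem admits no bounded positive weak solution. The plan is to compare a hypothetical solution with the principal eigenfunction of the subelliptic $p$-Laplacian. In what follows I work, as the construction preceding the lemma does, with solutions satisfying the a priori bound $\|u\|_{L^\infty(\Omega)}\le i$ that is built into the cutoff problem \eqref{p1eq5.1}--\eqref{p1eq5.2}.

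First I would record the spectral ingredient. Set
\[
\lambda_{1}:=\inf\Big\{\,\|\nabla_{\mathbb{G}}v\|_{L^{p}(\Omega)}^{p}\ :\ v\in W^{1,p}_{\mathbb{G},0}(\Omega),\ \|v\|_{L^{p}(\Omega)}=1\,\Big\}.
\]
By the compact embedding of Lemma~\ref{p1lemma2.1} (valid here for every $1\le s<p^{\ast}$, in particular $s=p$) the associated Poincar\'e inequality holds, so $\lambda_{1}>0$, and the infimum is attained by some minimiser which, after normalisation and an application of the strong minimum principle already used in Section~\ref{s3}, may be taken to be a positive eigenfunction $\varphi_{1}\in W^{1,p}_{\mathbb{G},0}(\Omega)\cap L^{\infty}(\Omega)$ with $-\Delta_{p,\mathbb{G}}\varphi_{1}=\lambda_{1}\varphi_{1}^{p-1}$. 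Next I would invoke the Picone inequality for the horizontal gradient: for $w>0$ and $\psi\ge 0$ of Sobolev class one has, pointwise, $|\nabla_{\mathbb{G}}w|^{p-2}\nabla_{\mathbb{G}}w\cdot\nabla_{\mathbb{G}}\big(\psi^{p}w^{1-p}\big)\le|\nabla_{\mathbb{G}}\psi|^{p}$ (the identity is purely algebraic in the gradient vectors, and the chain rule $\nabla_{\mathbb{G}}F(w)=F'(w)\nabla_{\mathbb{G}}w$ is available on $\mathbb{G}$). Feeding $\psi=\varphi_{1}$, $w=v+\varepsilon$ into this, testing the inequality $-\Delta_{p,\mathbb{G}}v\ge\mu v^{p-1}$ against the admissible function $\varphi_{1}^{p}(v+\varepsilon)^{1-p}$, and letting $\varepsilon\downarrow 0$ (dominated convergence on the right, the Picone bound $\le\lambda_1\int\varphi_1^p$ being uniform in $\varepsilon$ on the left) yields the standard consequence: \emph{if $v\in W^{1,p}_{\mathbb{G},0}(\Omega)$, $v>0$, satisfies $-\Delta_{p,\mathbb{G}}v\ge\mu v^{p-1}$ weakly, then $\mu\le\lambda_{1}$.}

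With this in hand the nonexistence step is short. Suppose $u>0$ is a weak solution of \eqref{main} (equivalently of \eqref{p1eq5.2}) with $\|u\|_{L^{\infty}(\Omega)}\le i$. Since $1<q<p$ we have $q-p<0$, so on $\{0<u\le i\}$ — hence on all of $\Omega$ — the elementary inequality $u^{q-1}=u^{q-p}u^{p-1}\ge i^{\,q-p}u^{p-1}$ holds. Discarding the nonnegative singular term $fu^{-\eta}\ge 0$, we obtain $-\Delta_{p,\mathbb{G}}u\ge\lambda u^{q-1}\ge\lambda\,i^{\,q-p}u^{p-1}$ in the weak sense; by the previous paragraph this forces $\lambda\,i^{\,q-p}\le\lambda_{1}$, i.e.\ $\lambda\le\lambda_{1}i^{\,p-q}$. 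Consequently every $\lambda>\lambda_{1}i^{\,p-q}$ lies in $A$, so $A\neq\emptyset$ and $0\le\Lambda\le\lambda_{1}i^{\,p-q}<\infty$.

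I expect the delicate points to be the following. (i) The lower bound $u^{q-1}\ge i^{\,q-p}u^{p-1}$ is \emph{global} only because $u$ is bounded by $i$; absent such control the reaction $fu^{-\eta}+\lambda u^{q-1}$ is sublinear at infinity ($q-1<p-1$) and no nonexistence should be expected, so the argument genuinely belongs to the $L^\infty$-bounded (cutoff) setting, and this dependence on $i$ must be made explicit. (ii) Using $\varphi_{1}^{p}(v+\varepsilon)^{1-p}$, and $\varphi_{1}$ itself, as test functions requires extending Definition~\ref{p1def 2.4} from $C^{1}_{c}(\Omega)$ to $W^{1,p}_{\mathbb{G},0}(\Omega)\cap L^{\infty}(\Omega)$ by density, together with the regularisation $v\rightsquigarrow v+\varepsilon$ near $\partial\Omega$; this is routine but is precisely where subellipticity enters only through the chain rule and the compactness of Lemma~\ref{p1lemma2.1}. (iii) One must confirm that the Picone inequality and the variational characterisation of $\lambda_{1}$ transfer to stratified groups; both do, verbatim from the Euclidean proofs, once $C^{\infty}_{c}$-density and the compact Sobolev embedding are granted.
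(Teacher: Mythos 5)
Your argument is correct and rests on the same core idea as the paper's proof: nonexistence for large $\lambda$ is obtained by comparison with the principal eigenvalue of the subelliptic $p$-Laplacian. The execution, however, is genuinely different. The paper takes an eigenpair $(\lambda_0,\xi)$ from Garain--Ukhlov, tests the equation directly with $\xi$, and derives a contradiction from the pointwise inequality $|t|^{q-2}t+\Lambda_1 t^{-\eta}>2\lambda_0 t$ ``for every $t>0$''; you instead characterise $\lambda_1$ variationally, prove via Picone's inequality that $-\Delta_{p,\mathbb{G}}v\ge\mu v^{p-1}$ with $v>0$ forces $\mu\le\lambda_1$, and then bound the reaction from below by $\lambda\, i^{\,q-p}u^{p-1}$ using the $L^{\infty}$ bound. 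Your route is the more robust one for $p\ne 2$: the paper's manipulation $\int_{\Omega}(\Delta_{p,\mathbb{G}}\xi)u\,dx=\lambda_0\int_{\Omega}u\xi\,dx$ treats the operator as self-adjoint and writes the eigenvalue equation as $-\Delta_{p,\mathbb{G}}\xi=\lambda_0\xi$ rather than $\lambda_0|\xi|^{p-2}\xi$, and its pointwise inequality cannot hold for \emph{all} $t>0$ when $1<q<2$ (the left-hand side is $o(t)$ as $t\to\infty$); both defects disappear in your formulation because the comparison is made at the natural homogeneity $p-1$ and only on the range $(0,i]$ actually attained by $u$. The price, which you flag honestly in point (i), is that your nonexistence conclusion applies to solutions obeying the cutoff bound $\|u\|_{L^{\infty}(\Omega)}\le i$, so $\Lambda\le\lambda_1 i^{\,p-q}$ is a threshold for the truncated problem rather than, literally, for all weak solutions in the sense of Definition \ref{p1def 2.4}. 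Since Section \ref{s5} works entirely within that truncated setting this matches how the lemma is actually used, but if one adopts your proof the dependence of the bound on $i$ should be recorded in the statement of the lemma.
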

	\begin{proof}
		Invoking Garain-Ukhlov \cite{Garain1}, let $\lambda_0$ be the eigenvalue of the operator $ (-\Delta_{p,\mathbb{G}})$ in $\Omega$ and let $\xi$ be the associated eigenfunction. Then
		\begin{equation}\label{p1eq5.4}
			\begin{aligned}
				(-\Delta_{p,\mathbb{G}}) \xi&= \lambda_0 \xi \hbox{ in } \Omega,\\
				\xi&>0, \\
				\xi&=0 \hbox{ on } \partial \Omega.
			\end{aligned}
		\end{equation}
		Using $\xi\ge0$ as a test function in the weak formulation of \eqref{p1eq5.4}, we have
		\begin{equation}\label{p1eq5.5}
			\lambda_0 \int_{\Omega} u \xi dx=	\int_{\Omega} ( \Delta_{p,\mathbb{G}} \xi) u dx=	\int_{\Omega}\left ( \frac{f(x)}{|u|^\eta} + \lambda |u|^{q-2}u \right)\xi dx .
		\end{equation}
		Since solution exists for every $\lambda$,  we can choose a sufficiently large  $\lambda =\Lambda_1 >0$ for which $|t|^{q-2}t+\Lambda_1t^{-\eta} >2\lambda_0t,$ for every $ t>0$. This is a contradiction to \eqref{p1eq5.5}. Hence  we can conclude that $\Lambda<\infty.$
		 This completes the proof of Lemma \ref{p1 lemma5.1}.
	\end{proof}
	We now prove the following two lemmas necessary to apply the Symmetric Mountain Pass Theorem.
	\begin{lemma}\label{p1 lemma5.2}
		The functional $\hat{J}$ satisfies $(PS)_c$ condition and
		 is
		 bounded below in $W^{1,p}_{0} (\Omega)\cap L^{\infty}(\Omega).$
	\end{lemma}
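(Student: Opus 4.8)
The plan is to write $\hat J$ as $\hat J(u)=\tfrac1p\|u\|^p-\tfrac{1}{1-\eta}\Phi(u)-\lambda\Psi(u)$ with $\Phi(u)=\int_\Omega f|u|^{1-\eta}\,dx$ and $\Psi(u)=\int_\Omega G(u)\,dx$, and first establish coercivity, which in particular yields boundedness below. Since $\phi$ is supported in $[-2i,2i]$ and $0\le\phi\le1$, one has $|G(t)|\le\int_0^{\min(|t|,2i)}|s|^{q-1}\,ds\le (2i)^q/q$ for every $t$, so $|\Psi(u)|\le \tfrac{(2i)^q}{q}|\Omega|$ is a fixed constant. For $\Phi$, the Sobolev embedding of Lemma \ref{p1lemma2.1} gives $|u|^{1-\eta}\in L^{p^\ast/(1-\eta)}(\Omega)\subset\mathcal M^{p^\ast/(1-\eta)}(\Omega)$ with quasinorm bounded by $C\|u\|^{1-\eta}$; combining this with $f\in\mathcal M^r(\Omega)$ through the H\"older inequality for Marcinkiewicz quasinorms (Theorem \ref{p1thm2.2}) and the finiteness of $|\Omega|$ (Theorem \ref{p1thm2.1}), one gets $\Phi(u)\le C_f\|u\|^{1-\eta}$. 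As $1-\eta<p$, Young's inequality yields $\Phi(u)\le\tfrac12\|u\|^p+C$, whence
\[
\hat J(u)\ \ge\ \frac{1}{2p}\|u\|^p-C',
\]
so $\hat J$ is bounded below and coercive on $W^{1,p}_{0}(\Omega)\cap L^\infty(\Omega)$.

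Next I would verify the $(PS)_c$ condition. Let $\{u_n\}\subset W^{1,p}_{0}(\Omega)\cap L^\infty(\Omega)$ satisfy $\hat J(u_n)\to c$ and $\hat J'(u_n)\to0$. Coercivity forces $\{u_n\}$ to be bounded in $W^{1,p}_{0}(\Omega)$, and a Stampacchia/De Giorgi truncation argument applied to the cutoff equation \eqref{p1eq5.2} -- where the nonlinearity $|u|^{q-2}u\phi(u)$ is globally bounded and the singular term has sublinear exponent $\eta<1$ -- gives, exactly as in Lemmas \ref{p1 lemma3.2} and \ref{p1 lemma3.4}, a uniform bound $\|u_n\|_{L^\infty(\Omega)}\le C$. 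Passing to a subsequence, $u_n\rightharpoonup u$ in $W^{1,p}_{0}(\Omega)$, $u_n\to u$ in $L^s(\Omega)$ for every $1\le s<p^\ast$, $u_n\to u$ a.e.\ in $\Omega$, and $\|u\|_{L^\infty(\Omega)}\le C$.

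Testing $\hat J'(u_n)$ with $u_n-u$ and rearranging,
\[
\int_\Omega|\nabla_{\mathbb{G}}u_n|^{p-2}\nabla_{\mathbb{G}}u_n\cdot\nabla_{\mathbb{G}}(u_n-u)\,dx=\langle\hat J'(u_n),u_n-u\rangle+\int_\Omega f\,\frac{\operatorname{sign}(u_n)}{|u_n|^{\eta}}(u_n-u)\,dx+\lambda\int_\Omega|u_n|^{q-2}u_n\phi(u_n)(u_n-u)\,dx .
\]
Here $\langle\hat J'(u_n),u_n-u\rangle\to0$; the last integral tends to $0$ because $|u_n|^{q-2}u_n\phi(u_n)$ is uniformly bounded while $u_n-u\to0$ in $L^1(\Omega)$; and for the singular integral I would write it as $\int_\Omega f|u_n|^{1-\eta}\,dx-\int_\Omega f\tfrac{\operatorname{sign}(u_n)}{|u_n|^\eta}u\,dx$ and use the a.e.\ convergence together with the uniform bound on $\int_\Omega f|u_n|^{1-\eta}\,dx$ and the generalized dominated convergence theorem to see that both pieces converge to $\int_\Omega f|u|^{1-\eta}\,dx$, so this integral also tends to $0$. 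Hence $\int_\Omega|\nabla_{\mathbb{G}}u_n|^{p-2}\nabla_{\mathbb{G}}u_n\cdot\nabla_{\mathbb{G}}(u_n-u)\,dx\to0$; since weak convergence also gives $\int_\Omega|\nabla_{\mathbb{G}}u|^{p-2}\nabla_{\mathbb{G}}u\cdot\nabla_{\mathbb{G}}(u_n-u)\,dx\to0$, subtracting and applying Lemma \ref{p1Lucio} yields $\|u_n-u\|\to0$; elliptic regularity for \eqref{p1eq5.2} then upgrades this to convergence in $W^{1,p}_{0}(\Omega)\cap L^\infty(\Omega)$, and $(PS)_c$ follows.

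The main obstacle is the singular term: because no lower bound on $u_n$ uniform in $n$ is available near $\{u=0\}$ or near $\partial\Omega$, the passage to the limit in $\int_\Omega f|u_n|^{-\eta}(u_n-u)\,dx$ must rest on a.e.\ convergence plus the equi-integrability supplied by the uniform bound on $\int_\Omega f|u_n|^{1-\eta}\,dx$ -- the same mechanism used in the strong-convergence Lemma \ref{p1 lemma4.1}. A secondary difficulty is securing the uniform $L^\infty$ bound along the $(PS)$ sequence, which is precisely where the cutoff $\phi$ in \eqref{p1eq5.2} is indispensable.
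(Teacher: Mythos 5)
Your overall architecture matches the paper's: bound the cutoff term $\lambda\int_\Omega G(u)\,dx$ by a constant, control the singular integral $\int_\Omega f|u|^{1-\eta}\,dx$ to obtain coercivity (hence boundedness below), then extract a bounded $(PS)$ sequence and prove strong convergence. The step that does not go through as written is the estimate $\Phi(u)\le C_f\|u\|^{1-\eta}$ via Theorem \ref{p1thm2.2}. Under the paper's standing hypothesis $f\in L^1_w(\Omega)=\mathcal{M}^1(\Omega)$, with $|u|^{1-\eta}\in\mathcal{M}^{p^\ast/(1-\eta)}(\Omega)$, the weak H\"older inequality places the product $f|u|^{1-\eta}$ in $\mathcal{M}^{s}(\Omega)$ with $\tfrac1s=1+\tfrac{1-\eta}{p^\ast}>1$, i.e.\ $s<1$; Theorem \ref{p1thm2.1} then yields only $L^{s'}$-integrability for $s'<s<1$, which does not control the $L^1$ integral $\int_\Omega f|u|^{1-\eta}\,dx$. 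Your bound is valid only for $f\in\mathcal{M}^r$ with $\tfrac1r+\tfrac{1-\eta}{p^\ast}<1$, not for $r=1$. The paper avoids this by splitting $\Omega$ into a compact set $K$ and $\Omega\setminus K$ and exploiting $u\in L^\infty(\Omega)$ (the domain of $\hat J$) to bound the integral by a multiple of $\|f\|_{1,w}$, with a constant depending on $\|u\|_{L^\infty}$ rather than on $\|u\|$; if you want a bound growing like $\|u\|^{1-\eta}$ you must either strengthen the hypothesis on $f$ or import that $L^\infty$ information.

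For the $(PS)_c$ condition your route is genuinely different from, and more detailed than, the paper's: the paper only records that coercivity bounds the sequence, asserts convergence of the gradient pairing, and cites Lemma \ref{p1 lemma4.1} for norm convergence, whereas you run the standard monotonicity argument (test $\hat J'(u_n)$ against $u_n-u$, kill the lower-order terms, apply Lemma \ref{p1Lucio}). That skeleton is correct, and your identification of the singular term as the real obstruction is accurate; note, however, that the appeal to generalized dominated convergence for $\int_\Omega f|u_n|^{-\eta}\operatorname{sign}(u_n)\,u\,dx$ still requires an actual equi-integrability or uniform local lower bound input (such as $u_n\ge\delta(K)>0$ on compacta, as in Lemmas \ref{p1 lemma3.2} and \ref{p1 lemma3.4}), since on the set where $u\ne0$ but $u_n$ is small the integrand is not dominated by an integrable function in any obvious way. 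The paper does not address this either, so you are no worse off there, but as written that step is a sketch rather than a proof.
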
 
	\begin{proof}
		By using the definition of $\phi$ in \eqref{p1eq5.3}, we have 
		\begin{equation}\label{p1eq5.6}
			\hat{J}(u)\geq \frac{1}{p} \|u\|^p - \frac{1}{1-\eta} \int_{\Omega } f|u|^{1-\eta} dx -\lambda C_{20}.
		\end{equation}
		Let $K\Subset \Omega,$ then by using the same idea of splitting domain like in the proof of Lemma \ref{p1 lemma3.2}, we have from \eqref{p1eq3.13} 
		\begin{equation}\label{p1eq5.7}
			\begin{aligned}
				\int_{\Omega} f u^{1-\eta}dx 
				&= \int_K f u^{1-\eta}dx +\int_{\Omega\setminus K} f u^{1-\eta}dx\\
				&\leq \int_K f u^{1-\eta}dx+ C_{21} \|f\|_{1,w}  
			\end{aligned}
		\end{equation}
		and since $ u\in  L^{\infty}(\Omega)$,  using the H\"{o}lder inequality, we get
		\begin{equation}\label{p1eq5.8}
			\int_K f u^{1-\eta}\leq C_{22}  \|f\|_{1,w},
		\end{equation} 
		hence by invoking \eqref{p1eq5.7} and \eqref{p1eq5.8}  in \eqref{p1eq5.6}, we can conclude  that
		$$  	\hat{J}(u)\geq \frac{1}{p} \|u\|^p - \frac{C_{23}}{1-\eta} \|f\|_{1,w} -\lambda C_{20}.  $$ Thus, $\hat{J}$ is coercive and bounded below in $W^{1,p}_{0} (\Omega)\cap L^{\infty}(\Omega).$ 
		
		Suppose now that $\{u_n\}$ 
		is
		 a Palais-Smale sequence for  $\hat{J}.$
		Then  $\{u_n\} $ is bounded due to the coerciveness of $\hat{J}.$ 
		 Therefore, up to a subsequence, $u_n \rightharpoonup u$ in $W^{1,p}_{0} (\Omega)\cap L^{\infty}(\Omega)$. So we have, for every
		  $\psi \in W^{1,p}_{0} (\Omega)\cap L^{\infty}(\Omega) ,$ 
		$$  \int_{\Omega } |\nabla_{\mathbb{G}} u_n|^{p-2} \nabla_{\mathbb{G}}u_n   \nabla_{\mathbb{G}}\psi  dx \to \int_{\Omega } |\nabla_{\mathbb{G}} u|^{p-2} \nabla_{\mathbb{G}}u   \nabla_{\mathbb{G}}\psi  dx, $$
	this using the compact embedding, we can conclude that $ u_n \to u $ in $L^p(\Omega).$ Finally, by Lemma \ref{p1 lemma4.1} we can conclude  that $\|u_n\| \to \|u\|.$
	 This completes the proof of Lemma \ref{p1 lemma5.2}.
	\end{proof}
	\begin{lemma}\label{p1 lemma5.3}
		For every $k\in \mathbb{N},$  there exists a symmetric, closed subset $S_k \subset  W^{1,p}_{0} (\Omega)\cap L^{\infty}(\Omega) $ with $0\notin S_k,$ such that  $\gamma(S_k)\geq k,$ and  for  every $ \frac{1}{p}<\lambda <\Lambda$, we have $\sup_{u\in S_k} \hat{J}(u)<0.$
	\end{lemma}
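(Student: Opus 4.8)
The plan is to verify hypothesis (ii) of the Symmetric Mountain Pass Theorem (Theorem \ref{p1thm5.1}) by exhibiting, for each $k$, a small sphere inside a carefully chosen $k$-dimensional subspace on which $\hat J$ is strictly negative. First, since $f\ge 0$ and $f\not\equiv 0$, there is $n_0\in\mathbb N$ with $|A|>0$, where $A:=\{x\in\Omega:f(x)>1/n_0\}$; after passing to a compact subset I may assume $A\Subset\Omega$. I would partition $A$ (up to a null set) into $k$ pairwise disjoint measurable pieces of positive measure and pick $e_1,\dots,e_k\in C_c^\infty(\Omega)$ with pairwise disjoint supports $U_1,\dots,U_k$ such that $e_i>0$ on $U_i\cap A$. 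Setting $E_k:=\operatorname{span}\{e_1,\dots,e_k\}\subset W^{1,p}_{0}(\Omega)\cap L^{\infty}(\Omega)$ gives a $k$-dimensional subspace of the energy space.

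The key step is a lower bound for the singular term on $E_k$. For $u=\sum_{i=1}^{k}c_ie_i\in E_k$ the disjointness of the supports yields $\|u\|^p=\sum_i|c_i|^p\|e_i\|^p$ and
\[
\int_\Omega f|u|^{1-\eta}\,dx\;\ge\;\sum_{i=1}^{k}|c_i|^{1-\eta}\int_{U_i\cap A}f\,e_i^{1-\eta}\,dx\;\ge\;\frac1{n_0}\sum_{i=1}^{k}|c_i|^{1-\eta}\int_{U_i\cap A}e_i^{1-\eta}\,dx .
\]
Since the weights and exponents are now fixed, compactness of the sphere $\{\sum_i|c_i|^p\|e_i\|^p=1\}$ together with positivity of the continuous, positively $1$-homogeneous map $c\mapsto\sum_i|c_i|^{1-\eta}$ on it shows that the right-hand side dominates $c_k'\|u\|^{1-\eta}$ for some $c_k'>0$. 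Finiteness of $\int_\Omega f|u|^{1-\eta}\,dx$ on $W^{1,p}_{0}(\Omega)\cap L^{\infty}(\Omega)$ is already granted by the splitting estimates \eqref{p1eq5.7}--\eqref{p1eq5.8} used in Lemma \ref{p1 lemma5.2}.

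Next I would estimate $\hat J$ on $E_k$ near the origin. Because $0\le\phi\le1$ one has $|G(t)|\le|t|^q/q$, so combining the bound above with the equivalence of all norms on the finite-dimensional space $E_k$,
\[
\hat J(u)\;\le\;\frac1p\|u\|^p-\frac{c_k'}{1-\eta}\|u\|^{1-\eta}+\frac{\lambda}{q}\,C_k\|u\|^q ,\qquad u\in E_k .
\]
As $0<\eta<1<q$ (and $p>1$), the exponent $1-\eta$ is strictly smallest, and the competing powers satisfy $p-1+\eta>0$, $q-1+\eta>0$; hence for $\rho_k>0$ sufficiently small (uniformly over the admissible range of $\lambda$, since $\lambda<\Lambda<\infty$ by Lemma \ref{p1 lemma5.1}) the negative term dominates and $\hat J(u)<0$ whenever $\|u\|=\rho_k$. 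I then set $S_k:=\{u\in E_k:\|u\|=\rho_k\}$, which is closed, symmetric, avoids $0$, and satisfies $\sup_{u\in S_k}\hat J(u)<0$ for every $\frac1p<\lambda<\Lambda$. Finally, $S_k$ is the $\rho_k$-sphere of a norm on the $k$-dimensional space $E_k$, hence carried onto $S^{k-1}\subset\mathbb R^k$ by an odd homeomorphism (radial projection), so by the standard properties of genus (cf.\ Kajikiya \cite{Kajikiya}) $\gamma(S_k)=k\ge k$ and $S_k\in\Gamma_k$.

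The one genuinely delicate point is the lower bound $\int_\Omega f|u|^{1-\eta}\,dx\ge c_k'\|u\|^{1-\eta}$ on $E_k$: it is exactly what forces the special choice of $E_k$ (disjoint bumps located where $f$ is bounded below) and is needed because $f$ is only a weak-$L^1$ datum, so we cannot rely on $f\in L^1(\Omega)$ nor on naive continuity of $u\mapsto\int_\Omega f|u|^{1-\eta}$. Everything else reduces to routine finite-dimensional estimates and the elementary properties of genus.
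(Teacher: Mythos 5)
Your proof is correct, but it runs on a genuinely different engine than the paper's. The paper works on an \emph{arbitrary} finite-dimensional subspace $X_m$, discards the singular term as merely nonpositive, and extracts negativity from the reaction term: it first proves (by contradiction) that $\int_{\{|u|\ge i\}}|u|^p\,dx\le \frac1p\int_\Omega|u|^p\,dx$ on a small ball of $X_m$, then uses $q<p$ to get $G(t)\ge 2M^p|t|^p$ for $|t|\le i$, and finally lands on $\hat J(u)\le M^p\bigl(\frac1p-\lambda\bigr)\|u\|_{L^p}^p-\frac{1}{1-\eta}\int_\Omega f|u|^{1-\eta}\,dx$, which is where the hypothesis $\lambda>\frac1p$ is actually consumed. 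You instead build a \emph{special} subspace $E_k$ of disjoint bumps sitting over a set where $f$ is bounded below, prove the lower bound $\int_\Omega f|u|^{1-\eta}\,dx\ge c_k'\|u\|^{1-\eta}$, and let the exponent $1-\eta<1<\min\{p,q\}$ do the work; the cutoff enters only through $|G(t)|\le|t|^q/q$. What each buys: your route gives a strictly stronger conclusion (negativity on $S_k$ for every $0<\lambda<\Lambda$, with no need for $\lambda>\frac1p$ and no real use of $q<p$ in this lemma), at the price of the more careful subspace construction --- for completeness you should make explicit that the $k$ disjoint balls each meeting $\{f>1/n_0\}$ in positive measure are obtained from $k$ distinct density points of that set, which is legitimate on a stratified group since the Haar measure is doubling for the Carnot--Carath\'eodory metric and the Lebesgue differentiation theorem applies. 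The paper's route is subspace-agnostic but is precisely what forces the restriction $\lambda\in(\frac1p,\Lambda)$ in Theorem \ref{p1thm5.2}; your argument would in fact allow that lower bound on $\lambda$ to be removed.
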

	\begin{proof}
		Let us define $X:=W^{1,p}_{0} (\Omega)\cap L^{\infty}(\Omega), $ and let $X_m$ denote any finite-dimensional subspace of $X$ such that dim$(X_m)=m.$ By the equivalence of norms on a finite-dimensional space, we have $\|u\| \leq M \|u\|_{L^p(\Omega)},~ \hbox{for every } u\in X_m.$ To prove that there exists constant $R>0$ such that 
		\begin{equation}\label{p1eq5.9}
			\frac{1}{p}\int_{\Omega} |u|^p dx \geq \int_{\{|u|\geq i\}} |u|^p dx, \hbox{ for every } u\in X_m , \|u\| \leq R,
		\end{equation}
		we use the method of contradiction. Let $ \{ u_n\}$ be a sequence in $X_m \setminus \{0\}$ such that $u_n \to 0$ in $X$ and $$ \frac{1}{p}\int_{\Omega} |u_n|^p dx < \int_{\{|u_n|\geq i\}} |u_n|^p dx.$$ Let us define $w_n = \frac{u_n}{\|u_n\|_{L^p(\Omega) }}.$ Then we have 
		\begin{equation}\label{p1eq5.10}
			\frac{1}{p} < \int_{\{|u_n|\geq i\}} |w_n|^p dx.
		\end{equation}
		Since $X_m$ is  finite-dimensional, we have up to a subsequence , that
		$w_n \to w$ in $X,$ which further implies  that
		 $w_n \to w$ also in $L^p (\Omega),$ as $n\to\infty.$ Since $ u_n\to 0,$ we have
		$$ | \{x\in \Omega : |u_n|>i\}| \to 0, ~~\text {as}~~ n\to \infty$$ 
		which contradicts \eqref{p1eq5.10}.\\
		Now, since $q<p,$ we have $\underset{t\to 0}\lim \frac{|t|^{q-2}t}{t^p}= \infty . $ Therefore, we can choose $0<i\leq 1$ such that 
		$$  G(u) = |u|^q\geq 2M^p u^p,$$
		so for every $u\in X_m \setminus\{0\}$ such that $\|u\| \leq R,$ using \eqref{p1eq5.9}, we obtain 
		\begin{equation*}
			\begin{aligned}
				\hat{J}(u) & \leq \frac{1}{p} \int_{\Omega } |\nabla_{\mathbb{G}}u|^p dx- \frac{1}{1-\eta} \int_{\Omega } f|u|^{1-\eta} dx -\lambda\int_{\{|u|\leq i \} } G(u)dx \\
				& \leq   \frac{1}{p} \|u\|^p -\frac{1}{1-\eta} \int_{\Omega } f|u|^{1-\eta} dx -2M^p \lambda\int_{\{|u|\leq i \} } |u|^p dx \\
				&= \frac{1}{p} \|u\|^p -\frac{1}{1-\eta} \int_{\Omega } f|u|^{1-\eta} dx -2M^p \lambda \left(\int_{\Omega } |u|^p dx- \int_{\{|u|>i \} } |u|^p dx\right)\\
				& \leq \frac{1}{p} \|u\|^p -\frac{1}{1-\eta} \int_{\Omega } f|u|^{1-\eta} dx -M^p \lambda  \int_{\Omega } |u|^p dx\\
				&\leq M^p (\frac{1}{p}-\lambda)  \|u\|^p_{L^{p} (\Omega)} -\frac{1}{1-\eta} \int_{\Omega } f|u|^{1-\eta} dx.
			\end{aligned}
		\end{equation*}
		For every  $\frac{1}{p}<\lambda<\Lambda $, we have $ \hat{J}(u)<0,$ so se now choose $0<r\leq R$ and $S_k=\{u\in X_k:\|u\|=r\}.$ This yields $\Gamma_k \neq\phi.$ Since $S_k$ is symmetric and closed with $\gamma(S_k)\geq k,$ it follows that $\underset{u\in S_k}\sup \hat{J}(u)<0.$
		 This completes the proof of Lemma \ref{p1 lemma5.3}.
	\end{proof}
	
	Finally, we can prove our second main result.
	
	\begin{proof}[Proof of Theorem \ref{p1thm5.2}]
		By the definition of $\hat{J}$ and the way we defined our cutoff function $\phi,$ we have that $ \hat{J}$ is even and $ \hat{J}(0)=0.$ By Theorem \ref{p1thm5.1} and Lemmas \ref{p1 lemma5.1} and  \ref{p1 lemma5.2}, we can conclude  that $ \hat{J}$
		has sequence of critical points $ \{u_n\}$ such that $ \hat{J}(u_n)<0$ and  $\hat{J}(u_n)\to 0^-. $ The positivity and boundedness of $\{u_n\}$  follow from Lemma \ref{p1 lemma3.4}.\\
		Moreover, by the definition of $\hat{J},$  
		\begin{equation*}
			\begin{aligned}
				\frac{1}{\beta} \langle \hat{J}^\prime(u_n),u_n\rangle -  \hat{J}(u_n) =	&\frac{1}{\beta}  \left[  \|u_n\|^p - \int_{\Omega }\left( f\frac{ sign(u_n)u_n}{|u_n|^\eta} +\lambda  |u_n|^q \phi(u_n)\right)dx\right] \\
				&\quad -\left[  \frac{1}{p} \|u_n\|^p- \frac{1}{1-\eta} \int_{\Omega } \left(f|u|^{1-\eta} -\lambda G(u)\right)dx, \right]\\
				&= \left(	\frac{1}{\beta}-\frac{1}{p}  \right) \|u_n\|^p - \left(	\frac{1}{\beta}- \frac{1}{1-\eta}\right) \int_{\Omega } f|u|^{1-\eta} dx\\
				& \quad + 	\frac{\lambda}{\beta} \int_{\Omega } (\beta G(u_n)-|u_n|^{q-2}u_n\phi(u_n)) dx\\
				&\geq \left(	\frac{1}{\beta}-\frac{1}{p}  \right) \|u_n\|^p + \left(\frac{1}{1-\eta}-	\frac{1}{\beta}\right)\int_{\Omega } f|u|^{1-\eta} dx\\
				&\geq \left(	\frac{1}{\beta}-\frac{1}{p}  \right) \|u_n\|^p,
			\end{aligned}
		\end{equation*}
		so by a simple observation,
		 $$ 	\frac{1}{\beta} \langle \hat{J}^\prime(u_n),u_n\rangle -  \hat{J}(u_n) = o_n(1),$$ which implies $$\left(	\frac{1}{\beta}-\frac{1}{p}  \right) \|u_n\|^p \leq  o_n(1),$$
		 thus, $ u_n \to 0$ in $X.$ Now, using the fact that $u_n\in L^\infty (\Omega),$ we  apply the Moser iteration technique to conclude that $\|u_n\|_{L^\infty (\Omega)} \leq i,$ as $n\to \infty.$ Therefore,   problem \eqref{p1eq5.2} has infinitely many solutions, which further guarantees that problem  \eqref{main} has infinitely many solutions for every $\frac{1}{p}<\lambda<\Lambda.$
		  This completes the proof of Theorem \ref{p1thm5.2}.
	\end{proof}
	\begin{remark}
		We have proved the existence and uniqueness of solutions for the superlinear case in Theorem \ref{p1theorem4.1} for sufficiently small $\lambda<0$, hence infinitely many solutions of   problem \eqref{main} will cease to
		 exists.
		  However, for a larger magnitude of $-\lambda,$ even the existence of solutions cannot be established, and therefore we have not discussed this case in this paper.
	\end{remark}
	
	\begin{remark} 
		As an illustrative example, we consider the
		 singular
		 semilinear Dirichlet boundary value problem studied by Perera-Silva \cite{perera1}:
		\begin{equation}\label{ex1}
			\left\{\begin{aligned}
			-\Delta_p u=&a(x)u^{-\eta}+\lambda f(x,u)~\text{in}~\Omega,\\
			u>&0~\text{in}~\Omega,\\
			u=&0 ~\text{on}~\partial\Omega.
			\end{aligned}\right.
	\end{equation}
	They considered the case when $\eta>0$, whereas we consider the case when $\eta\in (0,1)$. However, our analysis implies that   problem \eqref{ex1} has infinitely many solutions, which is an improvement over the result in \cite{perera1} for the case when $\eta \in(0,1)$.
	\end{remark}	
	
	\subsection*{Acknowledgements}
	Sahu acknowledges the funding received from the National Board for Higher Mathematics (NBHM) and Department of Atomic Energy (DAE) of India [02011/47/2021/NBHM(R.P.)/R\&D II/2615]. Repov\v{s} acknowledges the funding received from the Slovenian Research and Innovation Agency [P1-0292, J1-4031, J1-4001, N1-0278]. 
	We thank the referee for comments and suggestions.

\end{document}